\def\C{\mathbb{C}}
\def\N{\mathbb{N}}
\def\R{\mathbb{R}}
\def\T{\mathbb{T}}
\def\Z{\mathbb{Z}}
\newtheorem{Thm}{\bf Theorem}[section]
\newtheorem{Pro}[Thm]{\bf Proposition}
\newtheorem{Lem}[Thm]{\bf Lemma}
\begin{document}
\title{The mathematics of asymptotic stability in the Kuramoto model}
\author{Helge Dietert$^1$ and Bastien Fernandez$^2$}

\address{$^1$ Institut de Math\'ematiques de Jussieu - Paris Rive Gauche, Universit\'e Paris 7 Denis Diderot - Sorbonne Paris Cit\'e, 75205 Paris CEDEX 13 France\\
$^2$ Laboratoire de Probabilit\'es, Statistique et Mod\'elisation, CNRS - Universit\'e Paris 7 Denis Diderot - Sorbonne Universit\'e, 75205 Paris CEDEX 13 France}

\corres{Bastien Fernandez\\
\email{fernandez@lpsm.paris}}

\begin{abstract}
Now a standard in Nonlinear Sciences, the Kuramoto model is the perfect example of the transition to synchrony in heterogeneous systems of coupled oscillators. While its basic phenomenology has been sketched in early works, the corresponding rigorous validation has long remained problematic and was achieved only recently. This paper reviews the mathematical results on asymptotic stability of stationary solutions in the continuum limit of the Kuramoto model, and provides insights into the principal arguments of proofs. This review is complemented with additional original results, various examples, and possible extensions to some variations of the model in the literature.
\end{abstract}

\maketitle

\section{Introduction}
\subsection{The Kuramoto model of coupled oscillators}
The Kuramoto model is the archetype of collective systems composed of heterogeneous individuals that are influenced by attractive pairwise interactions. Originally designed to mimic chemical instabilities \cite{K75,K84}, it has since become a standard of the transition to synchrony in agent-based systems, and has been applied to various examples in disciplines such as Condensed Matter, Neuroscience and Humanities \cite{ABP-VRS05,PRK01}.

In its simplest form, this model considers a collection of $N\in\N$ oscillators, represented by their phase $\theta_i$, a variable in the unit circle $\T^1=\R/2\pi\Z$. The population dynamics is governed by the following set of globally coupled first order ODEs
\begin{equation}
\frac{d\theta_i}{dt}=\omega_i+\frac{K}{N}\sum_{j=1}^N\sin (\theta_j-\theta_i),\quad \forall i\in\{1,\dots,N\}.
\label{ORIKURA}
\end{equation}
The time-independent frequencies $\omega_i\in\R$ are randomly drawn in order to account for individual heterogeneities. The parameter $K\in\R^+$ measures the interaction strength. While, up to time rescaling, $K$ could be absorbed in the frequencies $\omega_i\mapsto \omega_i/K$, it is more convenient to investigate the dependence of the dynamics upon this parameter, for a given frequency distribution.

Clever intuition, elaborate analytic considerations and extensive numerics have provided comprehensive insights into the Kuramoto phenomenology, see e.g.\ \cite{D89,E85,KN87,MS05,OA08,PM15,SMM92,vHW93}. Nonetheless, due to heterogeneities, statements about the full nonlinear dynamics, certified by complete mathematical proofs, are rather scarce. They can be summarized as follows.

For weak interactions, KAM theory for dissipative systems \cite[Thm 6.1]{BHS96} or \cite[Thm 3.1]{CL-HB05} asserts that, for frequencies in a Lebesgue positive set in $\R^N$, the dynamics for $K$ small is conjugated to the system at $K=0$.\footnote{For a nice presentation of the original KAM theory in the Hamiltonian context, see \cite{SD14}.} This conjugacy implies infinite returns to arbitrary small neighborhoods of the initial condition in $\T^N$.\footnote{An open problem is to evaluate the dependence on $N$ of the related estimates, see \cite{W84} for similar considerations in Hamiltonian chains of coupled oscillators.}

Results for strong interactions contrast with weak coupling recurrence, see \cite{BCM15,DB11,HHK10,VM08} and also \cite{BDP12,CHJK12,DB11} for additional interesting statements.
For $K>\max_{i,j}|\omega_i-\omega_j|$ and provided that initial phase spreading is limited enough, full locking asymptotically takes place; ie.\ the limit
\[
\lim_{t\to +\infty}|\theta_i(t)-\theta_j(t)|
\]
exists for every pair $(i,j)$. In the extreme case of homogeneous populations (ie.\ $\omega_i$ independent of $i$), complete synchrony holds for every $K>0$, viz.\ we have
\[
\lim_{t\to +\infty}\max_{i,j}|\theta_i(t)-\theta_j(t)|=0
\]
for almost every initial phases. However, for exceptional initial conditions, 
the population can cluster into two synchronized groups\cite{BCM15,CEM17}. This clustering is not limited to homogeneous populations and may hold for symmetric frequency distributions \cite{CP09}.

No rigorous results exist about \eqref{ORIKURA} in regimes when interactions and heterogeneities effects balance. However, insights can be obtained by considering the continuum limit approximation.

\subsection{The Kuramoto PDE: basic features}
\subsubsection{Kuramoto dynamics at the continuum limit}\label{S-CONT}
The continuum approximation assumes that populations at the thermodynamic limit $N\to +\infty$ are described by absolutely continuous distributions $f$ on the cylinder $\T^1\times\R$, more precisely, by their densities. Under this assumption, time evolution is governed by the following PDE \cite{S88,SM91}
\begin{equation}
\partial_t f + \partial_\theta (fV[f])= 0
\label{KPDE}
\end{equation}
where
\[
V[f](\theta,\omega)=\omega + K \int_{\T^1\times\R} \sin(\theta' - \theta) f(d\theta',d\omega'),\ \forall (\theta,\omega)\in\T^1\times\R.
\]
For any trajectory of \eqref{ORIKURA}, the empirical measure $\mu_N(t)=\frac1{N}\sum\limits_{i=1}^N\delta_{\theta_i(t),\omega_i}$ (here $\delta_{\cdot ,\cdot}$ stands for the Dirac distribution) is a weak solution of \eqref{KPDE}. The continuum approximation is justified by the following basic features \cite{L05}, which are common to mean-field models in classical mechanics, especially the Vlasov equation \cite{BH77,D79,G13}.
\begin{itemize}
\item[$\bullet$] The Cauchy problem is globally well-posed for \eqref{KPDE}, viz.\ for every initial probability measure $f(0)$ on the cylinder, there exists a unique solution $t\mapsto f(t)$ defined for all $t\geq 0$. If $f(0)$ is absolutely continuous, then so is $f(t)$ for every $t>0$.
\item[$\bullet$] The solution continuously depends on the initial condition, in the weak topology. More precisely, if $d_{\text BL}(\cdot,\cdot)$ denotes the bounded Lipschitz distance of measures, then there exists $C>0$ such that for every pair of solution $t\mapsto f_i(t)$, $i=1,2$, we have
\[
d_{\text BL}(f_1(t),f_2(t))\leq d_{\text BL}(f_1(0),f_2(0))\; e^{Ct},\quad \forall t>0.
\]
\end{itemize}
For $N$ sufficiently large, $\mu_N(0)$ can be chosen close to an absolutely continuous distribution $f(0)$, ie.\ such that $d_{\text BL}(\mu_N(0),f(0))$ is small. The inequality above then implies the continuum approximation on finite time interval, ie.\ $d_{\text BL}(\mu_N(t),f(t))$ remains small for $t$ small enough.

In addition, the Kuramoto PDE has the following specific features.
\begin{itemize}
\item[$\bullet$] Galilean invariance: if $t\mapsto f(t)$ is a solution, then $t\mapsto R_{\Theta+\Omega t,\Omega}f(t)$ is a solution for every $(\Theta,\Omega)\in \T^1\times\R$, where $R_{\Theta,\Omega}$ is the representation on measures of the map $(\theta,\omega)\mapsto (\theta+\Theta,\omega+\Omega)$.  In particular, \eqref{KPDE} is equivariant with respect to the rigid rotation $R_\Theta:=R_{\Theta,0}$.
\item[$\bullet$] For every solution $t\mapsto f(t)$, the frequency marginal $\int_{\T^1}f(t,d\theta,d\omega)$ does not depend on $t$, and thus can be regarded as an input parameter on initial conditions.
\end{itemize}

\subsubsection{Basic phenomenology}\label{S-BASPHE}
The degree of synchrony in Kuramoto dynamics can be characterized by the order parameter
\[
r(t)=\int_{\T^1\times\R}e^{i\theta}f(t,d\theta,d\omega).
\]
In particular, stationary states can be classified accordingly: the case $r=0$ corresponds to the homogeneous stationary state $f_\text{hom}(d\theta,d\omega)=\frac{g(\omega)}{2\pi}d\theta d\omega$ for which $\theta$ is uniformly distributed independently of $\omega$, while $r\neq 0$ deals with partially locked states (PLS), for which angles such that $|\theta |\leq |r|\omega$ are uniquely attached to a single value of $\omega$. (An expression of PLS is given below.)

The Kuramoto PDE displays a large phenomenology depending on the interaction strength and the (absolutely continuous) frequency marginal.  The simplest case is when the corresponding density $g$ is unimodal and symmetric. (Thanks to Galilean invariance, its maximum can be set at the origin $0$). Then, the phenomenology can be summarized as follows (see Figure \ref{BIFDIAG}, left) \cite{S00}:
\begin{figure}[h]
\begin{center}
\includegraphics[scale=0.9]{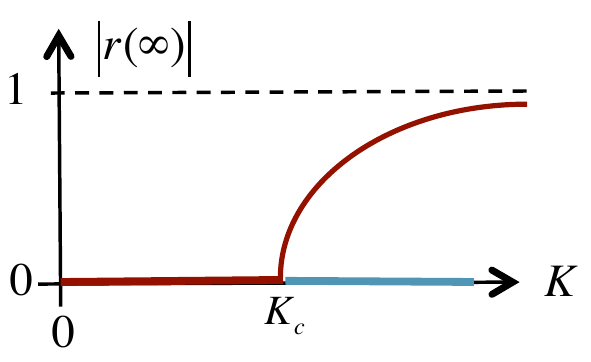}
\hspace{1cm}
\includegraphics[scale=0.9]{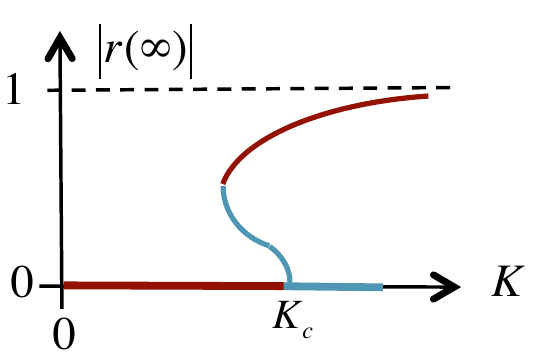}
\end{center}
\caption{Schematic bifurcation diagram (left) for a symmetric and unimodal frequency distribution $g$ and (right) for the Bi-Cauchy distribution $g_{\Delta,\Omega}$ when bimodal (see Section \ref{S-STABCOND}).  Red (resp.\ blue) lines indicate stable (resp.\ unstable) stationary solutions, see text for details.}
\label{BIFDIAG}
\end{figure}
\begin{itemize}
\item[$\bullet$] for $K<K_c:=\frac{2}{\pi g(0)}$, $f_\text{hom}$ is
  asymptotically stable. Hence, for every trajectory the order
    parameter asymptotically vanishes, $\lim_{t\to +\infty}r(t)=0$. This convergence
  is the analogue of the Landau damping phenomenon in the Vlasov
  equation \cite{V10}. (It is incompatible with KAM induced recurrence
  behaviors in \eqref{ORIKURA}. Hence, the continuum approximation
  mentioned above cannot hold for all $t>0$.)
\item[$\bullet$] at $K=K_c$, $f_\text{hom}$ becomes unstable and a circle of stable stationary PLS emerges for $K>K_c$ (together with a continuum of unstable PLS). In this regime, we have $\lim_{t\to +\infty}|r(t)|=|r_\text{pls}|\neq 0$ provided that $f(0)$ is not in the stable manifold of $f_\text{hom}$.
\end{itemize}
Illustrations of the dynamics in the finite dimensional model, both for $K<K_c$ and $K_>K_c$, are provided in the Supplementary Movies.
More elaborate bifurcation schemes occur for other marginals \cite{BU08,MBSOSA09}, especially for the bi-Cauchy distribution, see Fig.\ \ref{BIFDIAG} (right) and details in Section \ref{S-STABCOND}, and also for extensions of the model \cite{IPMS13,KP13,MP11,OW12,OW13}. Even for asymmetric unimodal marginals, the phenomenology can be involved.

\subsubsection{Proving the phenomenology: state-of-art and technical considerations}
While the phenomenology above had been identified in early studies, full rigorous confirmation has remained elusive until recently.
Mathematical studies have long been limited to linearized dynamics in strong topology. They have provided both stability criteria \cite{SM91} and evidence that the relaxation rate, either algebraic or exponential, depends on $g$'s regularity \cite{SMM92}. One should also mention that, for special frequency marginals, a rather impenetrable proof of $f_\text{hom}$ stability is exposed in \cite{C15}. In addition, complete synchronization has been proved to hold when $g$ is the Dirac distribution \cite{CCHKK14}. Besides, solid arguments have been provided for convergence of the order parameter dynamics to the corresponding one in the so-called Ott-Antonsen (OA) manifold \cite{OA09,OHA11}. There, the dynamics is governed by a finite-dimensional system when $g$ is meromorphic with finitely many poles in the lower half-plane \cite{OA08}; hence a standard analysis of stability and bifurcations can be developed in this case \cite{MBSOSA09} (see also Section 5.6.2 in \cite{D16b}).

The major obstacle to including nonlinearities in proofs is that, due to the free transport term $\omega\partial_\theta f$ in \eqref{KPDE}, in strong topology, the linearized dynamics has continuous spectrum on the imaginary axis \cite{MS07}. In fact, stationary states have all been shown to be nonlinearly unstable in the $L^2$-norm \cite{D16a,N15}. Therefore, any proof of asymptotic stability must consider weaker topology.

For suitable norms in weak topology and analytic frequency marginals, the linearized dynamics essential spectrum is located to the left of the imaginary axis in the complex plane \cite{DFG-V16}. Provided that the remaining discrete spectrum is under control -- hence the stability conditions -- a standard strategy for asymptotic stability can be considered: since the linearized dynamics decays exponentially fast, it can dominate nonlinear instabilities for small enough perturbations. In practice, the proof is not so straightforward and needs adjustments, especially because angular derivatives in \eqref{KPDE} imply that nonlinearities can be large even for small perturbations.

When the frequency marginal has only algebraic regularity, this strategy no longer applies because no spectral gap is at hand. Instead, the specific structure of linearized perturbation dynamics, which takes the form of a Volterra equation, needs to be exploited in order to prove algebraic damping via advanced bootstrap arguments \cite{D17}.

Besides, PLS stability deals with circles of stationary states, by equivariance with respect to rotations $R_\Theta$. The perturbation dynamics then must be neutral with respect to tangential perturbations. Asymptotic stability is to be proved for the relative equilibrium of the dynamics in the radial variable \cite{HI11}.

\subsection{Organization of the rest of the paper}
This paper aims to review stability results and their proofs for stationary solutions of \eqref{KPDE} that have been obtained in \cite{D16a,D16b,D17,DFG-V16,FG-VG16}. In few words, these results claim asymptotic convergence in the weak sense to either $f_\text{hom}$ or to some PLS, depending  on a corresponding stability condition. In addition, control of the order parameter relaxation speed will be given, which depends on the regularity of the initial condition (including the frequency distribution). Of note, thanks to Galilean invariance, all results immediately extend to globally rotating solutions.

The results are presented in Section \ref{S-RESULTS}. Section \ref{S-PROOF} provides insights into the main arguments of proofs, especially those that are likely to be of interest to readers not familiar with the analysis of PDEs. This includes linear stability analysis via considerations on Volterra equations and control of nonlinear terms by means of a Gearhart-Pr\"uss-like argument. The key point is to obtain weighted $L^2$ estimates on the solution's Fourier transform. This is not only critical for the proofs, but it also implies both convergence to the center manifold and to the OA manifold mentioned above (Section \ref{S-OA}).

Stability conditions in the statements will be expressed in terms of $K$ and $g$. Consistency considerations on these conditions are evaluated in Section \ref{S-STABCOND}, where bifurcation diagrams are also provided for various examples of frequency distributions, including original ones.

Finally, Section \ref{S-EXT} mentions some extensions of the Kuramoto model for which the approaches presented here apply to yield rigorous results on asymptotic stability. Limitations and open questions are also briefly discussed.

\section{Main results: Asymptotic stability in the Kuramoto PDE}\label{S-RESULTS}
This section describes the behavior of solutions $t\mapsto f(t)$ of \eqref{KPDE}, for a given absolutely continuous frequency marginal $\int_{\T^1}f(t,d\theta,d\omega)=g(\omega)d\omega$. More precisely, existence and stability conditions are given for the stationary states, together with the corresponding local basins of attraction.

The order parameter relaxation speed depends on the regularity of the
frequency marginal and of the initial
perturbation: more regularity implies faster decay. Regularity is usually quantified
by Fourier transform decay. Given a function $u$ on $\R$ and a measure
$v$ on $\T^1\times\R$, their Fourier transforms are defined by
\[
\widehat{u}(\tau)=\int_{\R}u(\omega) e^{-i\tau\omega} d\omega,\ \forall \tau\in\R \quad\text{and}\quad \widehat{v}_\ell(\tau)=\int_{\T^1\times\R}e^{-i(\ell\theta+\tau\omega)}v(d\theta,d\omega),\ \forall (\ell,\tau)\in\Z\times\R.
\]
Various constraints on Fourier transforms have been suggested, see end of Subsection \ref{S-HOMSTAB} below. For simplicity, we shall express constraints in terms of weighted norms.
Given a weight function $\phi:\R^+\to \R^+$ and a sequence of functions $u=\{u_\ell(\tau)\}\in \C^{\N\times\R^+}$, let
\[
\|u\|_{{\mathcal H}^1_\phi(\N\times\R^+)}=\left(\sum_{\ell\in\N}\int_{\R^+} \phi(\tau)^2\left(|u_\ell(\tau)|^2+|u'_\ell(\tau)|^2\right)d\tau\right)^{\frac12}
\]
and let also
\[
{\mathcal H}^1_\phi(\N\times\R^+)=\{u\in \C^{\N\times\R^+}\ :\ \|u\|_{{\mathcal H}^1_\phi(\N\times\R^+)}<+\infty\}.
\]
Typical weights are $\phi(\tau)= e^{a\tau}$ ($a>0$) and
$\phi(\tau)= (1+\tau)^b$ ($b>1$). In $\C^{\N\times \R}$, the norm
$\|\cdot \|_{{\mathcal H}^1_\phi(\N\times\R)}$ and space
${\mathcal H}^1_\phi(\N\times\R)$ are defined similarly. These norms
are designed to accommodate the Fourier transforms of
the appearing singular measures, such as PLS.  Moreover, we
shall need the following norms on frequency marginals
\[
\|\widehat{g}\|_{L^1_\phi(\R^+)}={\displaystyle\int_{\R^+}} \phi(\tau)|\widehat{g}(\tau)|d\tau\quad\text{and}\quad
\|\widehat{g}\|_{{\mathcal H}^1_\phi(\R^+)}=\left({\displaystyle\int_{\R^+}} \phi(\tau)^2\left(|\widehat{g}(\tau)|^2+|\widehat{g}'(\tau)|^2\right)d\tau\right)^{\frac12}.
\]
Of note, together with $\hat{g}(-\tau)=\overline{\hat{g}(\tau)}$, the condition $\|\widehat{g}\|_{{\mathcal H}^1_{e^{a\tau}}(\R^+)}<+\infty$ implies, via the Paley-Wiener theorem, that $g$ must be analytic in a strip around the horizontal axis in $\C$.

While focus is on asymptotic stability of certain solutions, the developed exponential stability analysis also fits the setting of the theory of center manifolds in infinite dimension \cite{VI92}. In particular, for Banach spaces defined using weighted $L^\infty$-norms for Fourier transforms, a center-unstable manifold has been proved to exist for $K\sim K_c$, which attracts all trajectories of \eqref{KPDE} in a sufficiently small neighborhood of $f_\text{hom}$ (see Theorem 7 in \cite{D16a} and also \cite{C15}).

\subsection{Asymptotic relaxation to the homogeneous state}\label{S-HOMSTAB}
A unique homogeneous stationary state $f_\text{hom}(d\theta,d\omega)=\frac{g(\omega)}{2\pi}d\theta d\omega$ exists for every $g$ and $K$, and its order parameter vanishes $r_\text{hom}=0$. In addition to regularity requirements on $g$, the stability of this state relies on the following condition \cite{D16a,FG-VG16}
\begin{equation}
\frac{K}2\int_{\R^+}\hat{g}(\tau)e^{-z\tau}d\tau\neq 1,\ \forall z\in\C\ :\ \text{Re}(z)\geq 0
\label{STABHOMOG}
\end{equation}
which involves the Laplace transform of $\hat{g}$. When $g$ is symmetric and unimodal, this requirement is equivalent to the inequality $K<\frac{2}{\pi g(0)}$ mentioned in the Introduction. In the general case, the argument principle and the Plemelj formula can be employed to show that \eqref{STABHOMOG} holds under the following analogue of the Penrose criterion in the Vlasov literature:
\[
\ \text{\sl We have}\ K<\frac{2}{\pi g(\Omega)}\ \text{\sl for every}\ \Omega\in\R\ \text{\sl s.t.}\ \int_{\R^+}\frac{g(\Omega-\omega)-g(\Omega+\omega)}{\omega}d\omega=0.
\]
This criterion is however not necessary for stability; the tri-Cauchy distribution at the end of Section \ref{S-STABCOND} provides a counter-example.
\begin{Thm}
  Assume that $g\in C^2(\R)$ is such that $\|\widehat{g}\|_{L^1_{(1+\tau)^b}(\R^+)}<+\infty$ for some $b>1$ and let $K$ be so that \eqref{STABHOMOG} holds. Then, there exists $\epsilon>0$ such that for every $f(0)\in C^2(\T^1\times\R)$ with marginal density $g$ and satisfying $\|\widehat{f(0)}\|_{{\mathcal H}^1_{(1+\tau)^b}(\N\times\R^+)}<\epsilon$, we have, in the weak sense,
\[
\lim_{t\to +\infty}f(t)=f_\text{\rm hom}.
\]
\label{THMSTHOM}
\end{Thm}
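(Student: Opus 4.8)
The plan is to pass to Fourier variables, to reduce the linearized problem to a scalar Volterra equation governed by condition \eqref{STABHOMOG}, and then to close a bootstrap estimate controlling the nonlinearity in the weighted space. First I would write $f(t)=f_\text{hom}+h(t)$ and expand in angular Fourier modes, $\tilde h_\ell(t,\omega)=\frac1{2\pi}\int_{\T^1}e^{-i\ell\theta}h(t,\theta,\omega)\,d\theta$. Since the marginal is preserved one has $\tilde h_0\equiv0$, and writing the velocity field as $V[f]=\omega+\frac{K}{2i}\bigl(e^{-i\theta}r-e^{i\theta}\bar r\bigr)$ with order parameter $r(t)=2\pi\int_\R\tilde h_{-1}(t,\omega)\,d\omega$, equation \eqref{KPDE} becomes the countable system
\[
\partial_t\tilde h_\ell=-i\ell\omega\,\tilde h_\ell-\frac{K\ell}{2}\bigl(r\,\tilde f_{\ell+1}-\bar r\,\tilde f_{\ell-1}\bigr),\qquad \ell\in\Z,
\]
whose only zeroth-order right-hand side comes from $\tilde f_0=g/2\pi$ and couples the modes $\ell=\pm1$ to themselves. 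Setting $u(t,\tau)=\int_\R e^{-i\tau\omega}\tilde h_1(t,\omega)\,d\omega$ (so that $\bar r=2\pi\,u(t,0)$ up to normalization) and taking the $\omega$-Fourier transform of the $\ell=1$ equation turns $-i\omega\,\cdot$ into $\partial_\tau$ and the $\omega$-integral into evaluation at $\tau=0$, giving the transport equation $\partial_t u=\partial_\tau u+\frac{K}{2}\hat g(\tau)\,u(t,0)+N$ with $N$ the quadratic term. Integrating along characteristics and evaluating at $\tau=0$ yields the closed (nonlinear) Volterra equation whose linear part is
\[
u(t,0)=u(0,t)+\frac{K}{2}\int_0^t\hat g(t-s)\,u(s,0)\,ds .
\]

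Condition \eqref{STABHOMOG} is precisely the statement that the Laplace transform of the convolution kernel $\frac{K}{2}\hat g$ avoids the value $1$ on the closed right half-plane. By the Paley-Wiener theory for Volterra equations this produces a resolvent kernel whose decay matches that of $\hat g$, so the solution $u(\cdot,0)$ -- and hence the order parameter -- inherits the integrability and decay of the forcing $u(0,t)$, which is the $\omega$-transform of the initial first mode evaluated at $\tau=t$. The weighted hypothesis $\|\widehat{f(0)}\|_{{\mathcal H}^1_{(1+\tau)^b}(\N\times\R^+)}<\epsilon$ with $b>1$ is exactly what makes this forcing decay and what fixes the algebraic relaxation rate. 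At the level of the full linearized semigroup the same step is naturally recast as a Gearhart-Pr\"uss-type resolvent bound, yielding control of $\|u(t)\|_{{\mathcal H}^1_{(1+\tau)^b}}$ by the initial weighted norm uniformly in $t$, together with decay of $u(\cdot,0)$.

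The genuinely hard part is the nonlinear closure, and it is forced on us by the factors $\ell$ multiplying the quadratic coupling: these are the Fourier signature of the angular derivative $\partial_\theta$, so the nonlinearity is \emph{not} small merely because $h$ is small in a weak norm, and it cannot be treated as a naive perturbation of the linear decay. I would set up a bootstrap in the space ${\mathcal H}^1_{(1+\tau)^b}(\N\times\R^+)$ of the statement (the reality relation $\tilde h_{-\ell}=\overline{\tilde h_\ell}$ allowing the reduction to $\ell\ge0$): posit an a priori bound on $\sup_t\|u(t)\|_{{\mathcal H}^1_{(1+\tau)^b}}$ together with a decay bound on $r(t)$, insert the full nonlinear Duhamel representation of each mode into the Volterra equation, and show both bounds are strictly improved once $\epsilon$ is small enough, so that a continuity argument in time closes them. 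The mechanism defeating the factor $\ell$ combines three ingredients: the smallness of $r$ coming from the Volterra estimate, the transport term $\ell\partial_\tau$ acting on the $\omega$-transform of the $\ell$-th mode, which shears it in $\tau$ at speed $\ell$ so that its feedback to the order parameter is sampled where the growing weight $(1+\tau)^b$ forces smallness, and the extra $\tau$-derivative carried by the ${\mathcal H}^1$ (rather than $L^2$) norm, which absorbs the attendant derivative loss.

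Finally, weak convergence to $f_\text{hom}$ follows once $r(t)\to0$: the zeroth mode is identically $g/2\pi$, the modes $\ell=\pm1$ vanish in the order-parameter sense by the Volterra estimate, and each higher mode carries an oscillatory factor $e^{-i\ell\omega t}$ whose pairing with any fixed smooth observable tends to $0$ by the Riemann-Lebesgue lemma, the uniform weighted bound controlling the $\omega$-integrals and the tail in $\ell$. I expect the principal obstacle to be exactly the simultaneous control, in the weighted ${\mathcal H}^1$ norm, of the $\ell$-weighted nonlinear coupling and of its feedback into the order parameter; by contrast the linear Volterra/Paley-Wiener step and the concluding dephasing argument are comparatively routine once the weighted a priori estimate is secured.
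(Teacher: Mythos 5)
Your plan follows essentially the same route as the paper: reduction to Fourier variables, the scalar Volterra equation \eqref{SOLVOLT1D} for the order parameter whose resolvent is controlled via the half-line Paley--Wiener/Gelfand theorem under \eqref{STABHOMOG}, a Gearhart--Pr\"uss-type recasting for the full linearized semigroup, and a bootstrap closure of the nonlinearity that beats the factor $\ell$ by combining the smallness of $r(t)$ with the shearing of free transport against the growing weight $(1+\tau)^b$ (the mechanism behind Lemma 4 of \cite{D17} and the $Y\to X$ resolvent bound). This matches the ingredients the paper assembles in Section \ref{S-PROOF} and then delegates, for this particular statement, to Theorems 5 and 39 of \cite{D16a}.
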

\vspace*{-0.7cm}
This statement, which by definition of weak convergence, implies $\lim\limits_{t\to +\infty}r(t)=0$ for the order parameter associated with $f(t)$, is an immediate consequence of the combination of Theorems 5 and 39 in \cite{D16a}.

The stability condition \eqref{STABHOMOG} is optimal, as least as far as linear stability is concerned. This means that, if there exists $z_0\in\C$ with $\text{Re}(z_0)>0$ such that $\frac{K}2\int_{\R^+}\hat{g}(\tau)e^{-z_0\tau}d\tau=1$, then the linearized Kuramoto equation around $f_\text{hom}$ has a solution with exponentially growing order parameter \cite{SM91}.

The constraint $\|\widehat{f(0)}\|_{{\mathcal H}^1_{(1+\tau)^b}(\N\times\R^+)}<\epsilon$ actually impacts the perturbation $f(0)-f_\text{\rm hom}$ and is justified by the possible existence of PLS while \eqref{STABHOMOG} holds, see e.g.\ \cite{DFG-V16,MBSOSA09,OW13}. However, such coexistence can only happen for relatively strong interaction and the conclusion of Theorem \ref{THMSTHOM} can be asserted for any $C^4$ initial perturbation of finite weighted Sobolev norm ${\mathcal H}^4$, provided that $K$ is small enough (Proposition 3.2 in \cite{FG-VG16} combined with Theorems 39 in \cite{D16a}). Moreover, when focus is made on the observable $r(t)$, the uniform constraint
\[
K\leq \frac2{\|\hat{g}\|_{L^1(\R^+)}}\ \left( =  K_c\ \text{if}\ g\ \text{unimodal and symmetric}\right)
\]
ensures that $\lim\limits_{t\to +\infty}r(t)=0$ holds for the trajectory of every initial perturbation in ${\mathcal H}^1_{(1+\tau)^b}(\N\times\R^+)$ (see Section 4.4 in \cite{D16b}).

Asymptotic convergence of $f(t)$ in Theorem \ref{THMSTHOM}  is proved using accurate control of the relaxation rate of its Fourier transform.
As anticipated in \cite{SMM92}, the order parameter relaxation rate can be estimated based on the initial perturbation regularity.
\begin{Pro}
  (i) Under the conditions of Theorem \ref{THMSTHOM}, we have
  \[
    r(t)=O(t^{-b}).
  \]
  (ii) If, in addition,
  $\|\widehat{g}\|_{L^1_{e^{a\tau}}(\R^+)}<+\infty$ for some $a>0$,
  then, there exist $\epsilon,a'>0$ such that, for every
  $f(0)\in C^2(\T^1\times\R)$ with
  $\|\widehat{f(0)}\|_{{\mathcal
      H}^1_{e^{a\tau}}(\N\times\R)}<\epsilon$, we have
  \[
    r(t)=O(e^{-a' t}).
  \]
  \label{LANDAUDAMP}
\end{Pro}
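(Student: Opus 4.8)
The plan is to reduce both statements to the asymptotics of a single scalar Volterra equation governing the order parameter, and then to read off the decay rate from the regularity encoded in the weighted norms. Writing $f(t)=f_\text{hom}+h(t)$ and using that $V[f]$ depends on $f$ only through $r$, I would integrate the free-transport part $\omega\partial_\theta h$ of \eqref{KPDE} along its characteristics (Duhamel's formula) and project onto the first angular Fourier mode. Isolating the contribution that is linear in $r$ and comes from the coupling against $f_\text{hom}$, this yields for $\bar r(t)$ (the complex conjugate of the order parameter, with $|\bar r|=|r|$) an identity of the form
\[
\bar r(t)=\widehat{f(0)}_1(t)+\frac K2\int_0^t \hat g(t-s)\,\bar r(s)\,ds+\mathcal N(t),
\]
where $\mathcal N$ gathers the nonlinear contributions, which are quadratic in $h$ and enter through products of $r$ with the higher modes $\widehat h_\ell$. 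The Laplace transform of the convolution kernel $\tfrac K2\hat g$ is precisely the quantity appearing in \eqref{STABHOMOG}.

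Second, I would analyse the linear skeleton. Condition \eqref{STABHOMOG} says exactly that $1-\tfrac K2\int_{\R^+}\hat g(\tau)e^{-z\tau}d\tau$ does not vanish on $\{\text{Re}\,z\ge 0\}$, while it tends to $1$ as $|z|\to\infty$. By the Paley--Wiener theory for Volterra convolution equations in the Beurling algebra $L^1_\phi(\R^+)$ with the submultiplicative weight $\phi(\tau)=(1+\tau)^b$, the hypothesis $\|\hat g\|_{L^1_{(1+\tau)^b}(\R^+)}<+\infty$ together with this non-vanishing guarantees that the associated resolvent kernel $R$ also lies in $L^1_{(1+\tau)^b}(\R^+)$. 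For part (ii), exponential decay of $\hat g$ makes $z\mapsto\tfrac K2\int_{\R^+}\hat g(\tau)e^{-z\tau}d\tau$ analytic on $\{\text{Re}\,z>-a\}$, so by continuity the non-vanishing persists on a strip $\{\text{Re}\,z\ge -a'\}$ for some $0<a'\le a$, and the resolvent then lies in $L^1_{e^{a'\tau}}(\R^+)$.

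Third comes the decay of the data-driven source and its propagation through the resolvent. The bound $\|\widehat{f(0)}\|_{\mathcal H^1_\phi(\N\times\R^+)}<\epsilon$ forces $\phi\,\widehat{f(0)}_1\in H^1(\R^+)$, whence by the one-dimensional Sobolev embedding $H^1(\R^+)\hookrightarrow L^\infty(\R^+)$ one gets $|\widehat{f(0)}_1(t)|\le C\phi(t)^{-1}$, that is $O(t^{-b})$ in case (i) and $O(e^{-at})$ in case (ii). A splitting of the convolution integral at $s=t/2$ then shows that convolving an $L^1_\phi$ kernel with a source of size $\phi^{-1}$ reproduces a bound of size $\phi^{-1}$ (the condition $b>1$ ensuring the relevant integrability, resp.\ $a'\le a$), so that the purely linear part $\widehat{f(0)}_1+R*\widehat{f(0)}_1$ already obeys the claimed rate.

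Finally, and this is the \emph{crux}, I would close the estimate on the full nonlinear equation by a bootstrap. Rewriting the Volterra identity as $\bar r=(\widehat{f(0)}_1+\mathcal N)+R*(\widehat{f(0)}_1+\mathcal N)$, it suffices to show that $\mathcal N$ decays at the same rate as the source. Here I would exploit the uniform-in-time smallness of $\|\widehat{h(t)}\|_{\mathcal H^1_\phi}$ that underlies Theorem \ref{THMSTHOM}, starting from the qualitative decay $r(t)\to 0$ furnished by that theorem, deriving a first crude algebraic (resp.\ exponential) rate for $\mathcal N$, and iterating the identity above to reach the sharp exponent. The main obstacle is exactly this nonlinear step: the angular derivative $\partial_\theta$ in \eqref{KPDE} inserts a factor $\ell$ in the equation for each mode $\widehat h_\ell$, so $\mathcal N$ cannot be controlled by $\sup_t|r(t)|$ alone, and one is forced to propagate the weighted $L^2$ bounds on the whole Fourier transform $\widehat h$ --- the Gearhart--Pr\"uss-type estimate at the heart of the proof of Theorem \ref{THMSTHOM} --- in order to absorb the derivative loss and make the bootstrap converge.
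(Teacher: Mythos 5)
Your proposal follows essentially the same route as the paper: reduction to the scalar Volterra equation \eqref{SOLVOLT1D}, control of the resolvent kernel in the weighted algebra $L^1_\phi(\R^+)$ via the half-line Gelfand/Paley--Wiener theorem under \eqref{STABHOMOG}, Sobolev embedding for the decay of the source $u_1(0,t)$, a weighted convolution bound, and a Gearhart--Pr\"uss-type argument to absorb the derivative loss in the nonlinearity. The only cosmetic differences are that the paper uses sub-multiplicativity of $\phi$ together with Young's inequality rather than splitting the convolution at $s=t/2$, and that your ``by continuity'' step for extending the non-vanishing of $1-\tfrac{K}{2}\int_{\R^+}\hat g(\tau)e^{-z\tau}d\tau$ to $\{\mathrm{Re}\,z\ge -a'\}$ is made rigorous there via the Riemann--Lebesgue lemma and analyticity (finitely many zeros, all with negative real part).
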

\vspace*{-0.7cm}
Statement (i) (resp.\ (ii)) is a consequence of Theorem 5 (resp.\ 4) in {\rm \cite{D16a}}. Under the conditions of (ii), the essential spectrum of the linearized dynamics operator is contained in the half-plane $\text{Re}(z)\leq -a$. In the complement half-space, the spectrum consists of finitely many eigenvalues, all of them have negative real part. The rate $a'$ corresponds to these eigenvalue largest real part.

Finally, one can mention that the conclusions of (i) and (ii) apply under weaker assumptions on $g$ and $f(0)$ \cite{D16a}. These conditions express as pointwise constraints on $\hat{g}$ and $\widehat{f(0)}$ and imply similar pointwise decay estimates for $\widehat{f(t)}$. Besides, inspired by a similar analysis for the Vlasov-HMF equation \cite{FR16}, ref.\ \cite{FG-VG16} considers perturbations in the original space of measure densities via the weighted Sobolev norm defined by
\[
\sum_{k_\theta,k_\omega\geq 0,\ k_\theta+k_\omega\leq n}\|\langle \omega\rangle\partial_\theta^{k_\theta}\partial_\omega^{k_\omega}v\|_{L^2(\T^1\times\R)}^2, \quad \text{where}\quad \langle \omega\rangle=\sqrt{1+\omega^2}\ \text{and}\ n\geq 4.
\]
In this setting, polynomial Landau damping is obtained as in (i) above, as well as algebraic convergence in a twisted reference frame, of the Sobolev norm of the density of $f(t)$.

\subsection{Asymptotic relaxation to PLS}\label{S-ASYMPLS}
Recall  that $R_\Theta$ denotes the representation on measures of the rigid rotation on the cylinder. In full generality, PLS can be defined as solutions of \eqref{KPDE} of the form $t\mapsto R_{\Omega t}f_\text{pls}$, for some global frequency $\Omega\in\R$ and reference measure $f_\text{pls}$ with non-vanishing order parameter
\[
r_\text{pls}:=\int_{\T^1\times\R}e^{i\theta}f_\text{pls}(d\theta,d\omega)\neq 0.
\]
Galilean invariance implies that PLS are indifferent to the action of $R_\theta$; hence we may assume that the state $f_\text{pls}$ has real and positive order parameter $r_\text{pls}$ and consider the circle $\{R_\Theta f_\text{pls}\}_{\Theta\in\T^1}$. For the same reason, we may assume that $\Omega=0$, up to a translation of $g$. In this case, every $R_\Theta f_\text{pls}$ is a stationary state and one can show that the corresponding expression of $f_\text{pls}$ is \cite{ABP-VRS05,MS07,S00}
\[
f_\text{pls}(\theta,\omega)=\left\{\begin{array}{ccl}
\left(\alpha(\omega)\delta_{\arcsin(\frac{\omega}{Kr_\text{pls}})}(\theta)+(1{-}\alpha(\omega))\delta_{\pi-\arcsin(\frac{\omega}{Kr_\text{pls}})}(\theta)\right)g(\omega)&\text{if}&|\omega|\leq Kr_\text{pls}\\
\frac{\sqrt{\omega^2 - (Kr_\text{pls})^2}}{2\pi |\omega - Kr_\text{pls}\sin \theta|} g(\omega) &\text{if}&|\omega|> Kr_\text{pls}
\end{array}\right.
\]
which confirms the singular nature of PLS \cite{S00}.
Here the measurable function $\alpha:[-Kr_\text{pls},Kr_\text{pls}]\to [0,1]$ quantifies the relative contribution of the two equilibria $\arcsin(\frac{\omega}{Kr_\text{pls}})$ and $\pi-\arcsin(\frac{\omega}{Kr_\text{pls}})$ of the equation of characteristics
\begin{equation}
\dot\theta=\omega-Kr_\text{pls}\sin\theta.
\label{CHARACEQU}
\end{equation}
The PLS with $\alpha=1$ a.e.\ is denoted by $f_\text{s}$, and its order parameter by $r_\text{s}$. The equilibrium $\arcsin(\frac{\omega}{Kr_\text{pls}})$ is stable for the one-dimensional dynamics \eqref{CHARACEQU}, while the other one is unstable. This suggests that only $f_\text{s}$ can be stable among possible $f_\text{pls}$ \cite{MS07}. This argument can be formally justified using the norm above. Indeed, provided that $\|\widehat{g}\|_{{\mathcal H}^1_{e^{a\tau}}(\R)}<+\infty$, the only $f_\text{pls}$ whose Fourier transform lies in ${\mathcal H}^1_{e^{a\tau}}(\N\times\R)$ turns out to be $f_\text{s}$ (Proposition A.2 in \cite{DFG-V16}).

The explicit expression of $f_\text{pls}$ yields an existence condition, which materializes as a self-consistency condition on $r_\text{pls}$ \cite{DFG-V16,MS07,S00}. For $f_s$, this condition writes
\begin{equation}
\int_\R\beta\left(\frac{\omega}{Kr_\text{s}}\right)g(\omega)d\omega=r_\text{s}\quad\text{where}\quad
\beta(\omega)=-i\omega+\left\{\begin{array}{ccl}
\sqrt{1-\omega^2}&\text{if}&|\omega|\leq 1\\
i\omega\sqrt{1-\omega^{-2}}&\text{if}&|\omega|> 1.
\end{array}\right.
\label{EXISTPLS}
\end{equation}
Of note, if $g$ is symmetric around 0, then the imaginary part of the LHS here automatically vanishes and the existence condition becomes \cite{MS07,OW12,OW13,S00}
\[
\int_{-Kr_\text{s}}^{Kr_\text{s}}\beta\left(\frac{\omega}{Kr_\text{s}}\right)g(\omega)d\omega=r_\text{s}.
\]
If also $g\in C^0$, an analysis of this condition shows that a PLS
$f_\text{s}$ exists for every $K>K_c$
\cite{S00}. Moreover, if $g$ is unimodal, $f_\text{s}$ is unique and
does not exist for $K\leq K_c$. For more general frequency marginals, PLS
existence is not so simple but can be granted, once allowing for
$\Omega\neq 0$,  under the condition that the homogeneous
state is unstable, see Section \ref{S-STABCOND} below.

For the stability condition, the following notations are needed: given
$z\in\C$ with $\text{Re}(z)\geq 0$ and $r\in\R^+$, let $M(z,r)$ be the
$2\times 2$ matrix defined by
\[
M(z,r)=\left(\begin{array}{cc}
J_0(z,r)&J_2(z,r)\\
\overline{J_2(\bar{z},r)}&\overline{J_0(\bar{z},r)}\end{array}\right)\quad \text{with}\quad J_k(z,r)=\int_\R\frac{\beta^k\left(\frac{\omega}{Kr}\right)}{z+ i\omega+Kr\beta\left(\frac{\omega}{Kr}\right)}g(\omega)d\omega,
\]
(where the quantities $J_k$ are defined by continuity for $\text{Re}(z)=0$).
\begin{Thm}
Given $b>\tfrac32$, assume that $\|\hat{g}\|_{{\mathcal H}^1_{(1+\tau)^{b_g}}(\R)}<+\infty$ for some $b_g>b+3$ and let $K$ be such that a stationary PLS $f_\text{\rm s}$ with marginal density $g$ and order parameter $r_\text{\rm s}\in\R^+$ exists and satisfies
\begin{equation}
\left\{
\begin{aligned}
& \det \left(\text{\rm Id}-\frac{K}2 M(z,r_\text{\rm s})\right)\neq 0,\ \forall z\neq 0\ \text{with}\ \text{\rm Re}(z)\geq 0, \\
&  z=0\ \text{\rm is a simple zero of the function}\ z\mapsto \det \left(\text{\rm Id}-\frac{K}2 M(z,r_\text{\rm s})\right).
\end{aligned}
\right.
\label{STABCOND}
\end{equation}
Then, there exists $\epsilon>0$ such that for every $f(0)$ with marginal density $g$ and so that
\[
\|\widehat{f(0)}-\widehat{R_{\Theta}f_\text{\rm s}}\|_{{\mathcal H}^1_{(1+\tau)^{b}}(\N\times\R^+)}<\epsilon\ \text{for some}\ \Theta\in\T^1
\]
there exists $\Theta_\infty\in\T^1$ such that we have
\[
\lim_{t\to+\infty}f(t)=R_{\Theta_\infty} f_\text{s}\ \text{(weak sense)}\quad \text{and}\quad |r(t)-r_\text{s}e^{i\Theta_\infty}|=O(t^{\tfrac12-b}).
\]
\label{SOBOLEVPLS}
\end{Thm}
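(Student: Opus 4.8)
The plan is to prove orbital asymptotic stability by factoring out the rotational symmetry with a modulation argument, reducing the linearized order-parameter dynamics to a two-component Volterra equation whose symbol is exactly $\text{Id}-\tfrac{K}{2}M(z,r_\text{s})$, and then closing the nonlinear problem through a weighted-$L^2$ bootstrap on the Fourier transform. First I would pass to Fourier variables $\hat f_\ell(\tau)$ and write the solution along the group orbit as $f(t)=R_{\Theta(t)}\bigl(f_\text{s}+u(t)\bigr)$, with a time-dependent modulation phase $\Theta(t)$ and perturbation $u(t)$. The freedom in $\Theta(t)$ is spent by imposing that $u(t)$ remain transverse to the neutral direction $\partial_\Theta R_\Theta f_\text{s}\big|_{\Theta=0}$ tangent to the circle $\{R_\Theta f_\text{s}\}$; this infinitesimal symmetry is precisely the origin of the simple zero at $z=0$ of $z\mapsto\det(\text{Id}-\tfrac{K}{2}M(z,r_\text{s}))$ required in \eqref{STABCOND}. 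Differentiating the transversality constraint produces a scalar equation for $\dot\Theta$ whose right-hand side is at least quadratic in $u$.

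Next I would linearize the evolution of $u$ around $f_\text{s}$. The free-transport term $\omega\partial_\theta$ lets one integrate the homogeneous linear flow explicitly along the characteristics \eqref{CHARACEQU}, and the self-consistent coupling through $V[\,\cdot\,]$ collapses the dynamics of the order parameter and its conjugate into a Volterra convolution equation with Laplace symbol $\text{Id}-\tfrac{K}{2}M(z,r_\text{s})$. The first line of \eqref{STABCOND} guarantees this symbol is invertible on $\{\text{Re}(z)\ge 0\}\setminus\{0\}$, and the transversality constraint removes the residual zero at the origin, so the resolvent is uniformly bounded up to the imaginary axis. Because the weight here is polynomial, $(1+\tau)^b$, there is no spectral gap and exponential decay is unavailable; instead the regularity encoded in $\|\hat g\|_{{\mathcal H}^1_{(1+\tau)^{b_g}}(\R)}$ transfers to decay of the Volterra kernel, from which I would extract algebraic decay of the order parameter of the linearized flow. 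The perturbation weight $(1+\tau)^b$ then dictates the rate $O(t^{1/2-b})$, while the margin $b_g>b+3$ supplies the regularity needed for the kernel and resolvent estimates.

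The main effort, and the expected principal obstacle, is to upgrade this linear decay to the full nonlinear equation by a bootstrap that keeps the weighted norm $\|\hat u(t)\|_{{\mathcal H}^1_{(1+\tau)^{b}}(\N\times\R^+)}$ bounded while propagating the algebraic decay $O(t^{1/2-b})$ of the perturbation's order parameter. Since the field felt by every oscillator is a linear functional of the order parameter, feeding the decay above back into the Duhamel representation built from the linear Volterra solution operator controls the nonlinear source; weak convergence $f(t)\to R_{\Theta_\infty}f_\text{s}$ then combines this field decay with the phase mixing generated by transport along \eqref{CHARACEQU}. The difficulty is the nonlinear estimate itself: the angular derivative $\partial_\theta$ in \eqref{KPDE} makes the quadratic term carry a derivative, so a crude bound loses regularity and the nonlinearity need not be small even when $u$ is. Controlling it requires a Gearhart-Pr\"uss-type estimate on the weighted Fourier transform that follows the $\tau$-derivative through the ${\mathcal H}^1_\phi$ structure and absorbs the derivative loss using the margin $b_g>b+3$, the weighted norms being tailored to accommodate the singular measure nature of $f_\text{s}$.

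Finally, the modulation loop closes via the rate. Because the neutral direction has order parameter $ir_\text{s}\neq 0$, the modulation phase is governed by the order-parameter sector, where the nonlinearity is quadratic in the decaying perturbation, so $\dot\Theta=O(t^{1-2b})$; this is integrable once $b>1$, giving $\Theta(t)\to\Theta_\infty$ with $|\Theta(t)-\Theta_\infty|=O(t^{2-2b})$. Writing $r(t)=e^{i\Theta(t)}(r_\text{s}+r_u(t))$ with $r_u(t)=O(t^{1/2-b})$ the perturbation's order parameter, one gets $|r(t)-r_\text{s}e^{i\Theta_\infty}|\le r_\text{s}|e^{i\Theta(t)}-e^{i\Theta_\infty}|+|r_u(t)|=O(t^{2-2b})+O(t^{1/2-b})$, and the hypothesis $b>\tfrac32$ is exactly what makes the phase correction subdominant, yielding the stated $O(t^{1/2-b})$ together with the weak convergence $f(t)\to R_{\Theta_\infty}f_\text{s}$.
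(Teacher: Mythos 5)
Your overall architecture coincides with the one the paper outlines (and attributes in detail to \cite{D17}): pass to Fourier variables, factor out the rotation symmetry by writing $\widehat{f}=\widehat{R}_\Theta(\widehat{f_\text{s}}+u)$ with $u$ constrained to a complement of the neutral mode $u_\ell=i\ell(\widehat{f_\text{s}})_\ell$, reduce the linearized order-parameter dynamics to the two-dimensional Volterra equation \eqref{VOLTEQ} whose Laplace symbol is $\text{Id}-\tfrac{K}{2}M(z,r_\text{s})$, use \eqref{STABCOND} to split the Volterra resolvent into a constant matrix plus a kernel $q$ with $\|q\|_{L^1_{(1+t)^b}}<+\infty$, and close with a nonlinear bootstrap. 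Your closing observation that the hypothesis $b>\tfrac32$ makes the accumulated phase drift $O(t^{2-2b})$ subdominant to $O(t^{1/2-b})$ is a sensible reading of that assumption.

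There is, however, a genuine gap exactly where the exponent $\tfrac12-b$ has to come from: you assert that ``the perturbation weight $(1+\tau)^b$ dictates the rate $O(t^{1/2-b})$,'' but the naive Volterra estimate $\|r\|_{L^\infty_\phi}\leq(1+\|q\|_{L^1_\phi})\|I\|_{L^\infty_\phi}$ would give $O(t^{-b})$ \emph{provided} the input $I(t)=(e^{t{\cal L}_1}\text{\sl u}(0))_1(0)$ were bounded in $L^\infty_{(1+t)^b}$. The paper stresses that this is precisely what fails at a PLS: unlike the homogeneous case, where $e^{tL_1}$ is pure free transport and $I(t)=u_1(0,t)$ is controlled by Sobolev embedding, here $L_1$ couples neighbouring Fourier modes through the $\tfrac{Kr_\text{s}}{2}(u_{\ell-1}-u_{\ell+1})$ term, no pointwise-in-time bound on $I$ is available, and $e^{tL_1}$ cannot be exponentially (or even uniformly algebraically) stable in ${\mathcal H}^1_{(1+\tau)^b}$. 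The actual mechanism is the energy estimate (Lemma 4 in \cite{D17}) which only yields $\int_0^t|(e^{sL_1}u)_1(0)|^2(c+s)^{2b}\,ds\leq\|u\|^2_{{\mathcal H}^1_{(c+\tau)^b}}$, i.e.\ an $L^2$-in-time control of the forcing; the Cauchy--Schwarz inequality then converts this into pointwise decay at the cost of half a power, which is the origin of $t^{\tfrac12-b}$. Relatedly, your appeal to a Gearhart--Pr\"uss resolvent bound is misplaced in this theorem: that lemma is the paper's tool for the exponential (analytic) setting, where the resolvent of ${\cal L}_1+{\cal L}_2$ is uniformly bounded on a half-plane $\text{Re}(\lambda)\geq-\gamma$ with $\gamma>0$; with polynomial weights there is no spectral gap and no such half-plane bound, and the nonlinear terms must instead be handled by the bootstrap on the Volterra structure developed in \cite{D17}. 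Without the energy-estimate/Cauchy--Schwarz step your argument proves neither the stated rate nor, as written, any rate at all in the Sobolev setting.
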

\vspace*{-0.7cm}
As for \eqref{STABHOMOG}, the stability condition \eqref{STABCOND} can
be shown to be optimal.  Moreover, the statement above is a
simplification of Theorem 2 in \cite{D17}, which includes broader
regularity conditions and provides quantitative control of the
convergence in Fourier space. As for $f_\text{hom}$, an analogous
statement holds in the exponential setting.
\begin{Thm}
  Assume that $\|\hat{g}\|_{{\mathcal H}^1_{e^{a\tau}}(\R^+)}<+\infty$
  for some $a>0$ and let $K$ be such that a stationary PLS
  $f_\text{\rm s}$ with marginal density $g$ and order parameter
  $r_\text{\rm s}\in\R^+$ exists and satisfies \eqref{STABCOND}. Then,
  there exist $\epsilon,a'>0$ such that for every $f(0)$ with marginal
  density $g$ so that
  \[
    \|\widehat{f(0)}-\widehat{R_{\Theta}f_\text{\rm s}}\|_{{\mathcal H}^1_{e^{a\tau}}(\N\times\R)}<\epsilon\ \text{for some}\ \Theta\in\T^1
  \]
  there exists $\Theta_\infty\in\T^1$ so that we have (in addition to weak convergence of measures)
  \[
    |r(t)-r_\text{s}e^{i\Theta_\infty}|=O(e^{-a't}).
  \]
  \label{ANALYTPLS}
\end{Thm}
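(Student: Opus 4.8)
The plan is to run the exponentially weighted version of the strategy sketched in the text, which is cleaner than its polynomial counterpart (Theorem \ref{SOBOLEVPLS}) precisely because the exponential weight $e^{a\tau}$ buys a genuine spectral gap rather than mere algebraic decay. By equivariance of \eqref{KPDE} under $R_\Theta$, I would first rotate so that $\Theta=0$ in the hypothesis, i.e.\ assume $f(0)$ close to $f_\text{s}$. Then I would look for the solution in the moving frame $f(t)=R_{\Theta(t)}(f_\text{s}+h(t))$ with a phase $\Theta(t)$ to be chosen, $\Theta(0)=0$. Substituting into \eqref{KPDE} and using equivariance gives, for the perturbation, $\partial_t h = Lh + \dot\Theta\,\partial_\theta(f_\text{s}+h) - \partial_\theta(h\,V[h])$, where $L$ is the linearization of the dynamics about $f_\text{s}$ and the $\dot\Theta\,\partial_\theta f_\text{s}$ term is the generator of the rotation orbit. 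Since $V[\cdot]$ couples only through $\sin(\theta'-\theta)$, the whole mean-field interaction depends on a measure only via its first angular Fourier modes $\ell=\pm1$. Working throughout in the Fourier variables $(\ell,\tau)$ is essential here, because $f_\text{s}$ is a singular measure and only its transform lies in the spaces $\mathcal H^1_{e^{a\tau}}$; the idea is to tune $\dot\Theta$ so that $h$ stays transverse to the neutral direction $\partial_\theta f_\text{s}$, thereby suppressing the $z=0$ mode isolated in \eqref{STABCOND}.

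For the linear decay I would analyse $L$ in Fourier: it splits into transport along the characteristics \eqref{CHARACEQU} of $f_\text{s}$ and a rank-two mean-field feedback through the $\ell=\pm1$ moments. Solving the transport part explicitly and closing on these two moments produces a $2\times2$ Volterra system whose Laplace-transformed resolvent is exactly $\bigl(\text{Id}-\tfrac K2 M(z,r_\text{s})\bigr)^{-1}$. Two facts then need to be established. First, conjugating by the weight $e^{a\tau}$ turns the $\omega$-transport into a shift in $\tau$ that carries Fourier mass into the region where the weight penalises it, placing the essential spectrum of $L$ in $\{\text{Re}(z)\le -a\}$ (the analogue for $f_\text{s}$ of the bound recalled after Proposition \ref{LANDAUDAMP}). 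Second, since $\|\hat g\|_{\mathcal H^1_{e^{a\tau}}(\R^+)}<\infty$ makes $g$, hence $M(\cdot,r_\text{s})$, analytic in a strip by Paley--Wiener, condition \eqref{STABCOND} together with an argument-principle count shows that in $\{\text{Re}(z)>-a'\}$, for some $0<a'<a$, the determinant has only finitely many zeros, all with strictly negative real part once the simple zero at $z=0$ is projected out. A Gearhart--Pr\"uss-type argument — uniform resolvent bounds on $\{\text{Re}(z)=-a'\}$ in the $\mathcal H^1_{e^{a\tau}}$ norm read off from the explicit Volterra resolvent — then yields $\|e^{tL}P\|\le C e^{-a't}$ on the transverse subspace $P$.

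With linear decay in hand I would close the nonlinear problem by Duhamel, $h(t)=e^{tL}Ph(0)+\int_0^t e^{(t-s)L}P\bigl(\dot\Theta(s)\,\partial_\theta h(s)-\partial_\theta(h(s)V[h(s)])\bigr)\,ds$, coupled to a scalar equation fixing $\Theta(t)$ via the neutral-mode projection, and run a contraction in the ball $\{\sup_t e^{a't}\|\widehat{h(t)}\|_{\mathcal H^1_{e^{a\tau}}}\le\delta\}$. I expect the crux to be exactly the difficulty flagged in the text: the angular derivative $\partial_\theta$ multiplies each Fourier mode by $i\ell$, so the quadratic term loses a factor of $\ell$ and is not a priori controlled by the norm. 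The resolution is that $-\partial_\theta(h\,V[h])$ carries the factor $V[h]$, which depends only on the $\ell=\pm1$ moments of $h$ and is therefore strongly smoothing in $\ell$; combined with the exponential weight in $\tau$ and the one $\tau$-derivative built into $\mathcal H^1$, this compensates the derivative loss and closes the estimate for $\delta$ small, with the self-consistent choice $\dot\Theta=O(\|h\|^2)$. Integrability of $\dot\Theta$ gives a limit $\Theta_\infty=\lim_{t\to\infty}\Theta(t)$, and reading off the first-angular-mode, $\tau=0$ component of $R_{\Theta(t)}(f_\text{s}+h(t))$ yields $|r(t)-r_\text{s}e^{i\Theta_\infty}|=O(e^{-a't})$; weak convergence of the measures then follows from the exponential decay of every fixed Fourier coefficient of $h$.
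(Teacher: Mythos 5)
Your linear analysis follows the paper's route essentially verbatim: the moving-frame decomposition $\widehat f=\widehat R_{\Theta}(\widehat{f_\text{s}}+u)$ with $u$ in the range of the spectral projection transverse to $\text{Ker}(L_1+L_2)$, the reduction to a $2\times2$ Volterra equation whose Laplace-transformed kernel is $\tfrac K2 M(z,r_\text{s})$, and the Paley--Wiener/argument-principle count that converts \eqref{STABCOND} into exponential decay of the Volterra resolvent and of $e^{t(L_1+L_2)}$ on $P_s({\mathcal H}^1_{e^{a\tau}})$. That part is sound.

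The gap is in your nonlinear closure. You propose a contraction in the ball $\{\sup_t e^{a't}\|\widehat{h(t)}\|_{{\mathcal H}^1_{e^{a\tau}}}\le\delta\}$ and claim the loss of a factor $\ell$ from $\partial_\theta$ is compensated because $V[h]$ ``depends only on the $\ell=\pm1$ moments and is therefore strongly smoothing in $\ell$.'' This does not work. In Fourier variables the quadratic term is $(Qu)_\ell=\tfrac{K\ell}{2}\bigl(u_1(0)u_{\ell-1}-\overline{u_1(0)}u_{\ell+1}\bigr)$: the mean-field factor $u_1(0)$ is a \emph{scalar}, so its ``smoothness in $\ell$'' contributes nothing, and the prefactor $\ell$ survives untouched. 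Hence $Q$ does not map ${\mathcal H}^1_{e^{a\tau},0}$ into itself but only into the strictly larger space ${\mathcal H}^1_{e^{a\tau},1}$ (one power of $\ell$ worse), and the Duhamel integrand $e^{(t-s)(L_1+L_2)}P_sQ'h(s)$ cannot be estimated by $Ce^{-a'(t-s)}\|h(s)\|_{{\mathcal H}^1_{e^{a\tau}}}$; the extra $\tau$-derivative in ${\mathcal H}^1$ is irrelevant to the $\ell$-loss. A crude smoothing bound for the free transport of the form $\|e^{tL_1}\|_{{\mathcal H}^1_{e^{a\tau},1}\to{\mathcal H}^1_{e^{a\tau},0}}=O(t^{-1})$ is not integrable at $t=0$, so the pointwise-in-time contraction does not close. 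The paper's resolution is precisely the two-space Gearhart--Pr\"uss lemma of Section \ref{S-PROOF}: a uniform bound on the resolvent $\|((-\gamma+iy)\text{Id}-A)^{-1}\|_{Y\to X}$ along a vertical line, with $Y={\mathcal H}^1_{e^{a\tau},1}$ and $X={\mathcal H}^1_{e^{a\tau},0}$ (in the product spaces), yields an $L^2$-\emph{in-time} estimate $\|u\|_{X,a'}\le C(\|u(0)\|_X+\|P_sQ'u\|_{Y,a'})$, which captures the averaged-in-time gain of one factor of $\ell$ that no pointwise semigroup bound provides; a separate localization argument then upgrades $L^2$-in-time to $L^\infty$-in-time control. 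You invoke Gearhart--Pr\"uss only for the classical purpose of deducing semigroup decay on a single space, which misses the ingredient that actually tames the nonlinearity.
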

\vspace*{-0.7cm}
This statement is a consequence of Theorem 2.1 in \cite{DFG-V16},
which claims the following convergence of Fourier transforms (and
hence the conclusion on the order parameter)
\[
\|\widehat{f(t)}-\widehat{R_{\Theta_\infty}f_\text{s}}\|_{{\mathcal H}^1_{e^{a\tau}}(\N\times\R)}=O(e^{-a't}).
\]
Detailed considerations on existence and stability of PLS will be
given in Section \ref{S-STABCOND}. When $g$ is unimodal and symmetric,
\eqref{STABCOND} turns out to coincide with the existence condition
$K>K_c$ \cite{DFG-V16}. In particular, Theorems \ref{SOBOLEVPLS} and
\ref{ANALYTPLS} complete the proof of the bifurcation diagram in Fig.\
\ref{BIFDIAG} left.

In addition, global stability can never hold for PLS because
$f_\text{hom}$ is a distinct stationary state, which satisfies
$\|\widehat{f_\text{hom}}\|_{{\mathcal
    H}^1_{(1+\tau)^{b}}(\N\times\R^+)}<+\infty$ under the conditions
of Theorem \ref{SOBOLEVPLS} (or \ref{ANALYTPLS}).  Notice finally that
for $r_\text{s}\to 0$, not only the PLS expression reduces to that of
$f_\text{hom}$, but PLS existence and stability conditions converge as
well. We kept the exposition of stationary states separated for
historical and pedagogical reasons.

\section{Main ingredients of proofs}\label{S-PROOF}
The asymptotic stability of stationary states has been proved using
the formulation of the dynamics in Fourier space. Instead of providing
all details, we focus here on the decay of the order parameter under
the linearized dynamics, which turns out to be governed by a Volterra
equation of the second kind. Conditions \eqref{STABHOMOG} and
\eqref{STABCOND}, and asymptotic decay as given in Proposition
\ref{LANDAUDAMP} and in Theorems \ref{SOBOLEVPLS} and \ref{ANALYTPLS},
then follow from the corresponding theory \cite{GLS90}.  In addition,
we will comment on how to deal with the nonlinear terms in the
exponential case.

\subsection{Volterra equation for the order parameter}
Let $u=\{u_\ell\}_{\N}=\{u_\ell(\tau)\}_{\N\times \R}$ be an initial
perturbation with $u_0(\tau)=0$ (so that the frequency marginal is
preserved). Inserting the expression $\widehat{f_\text{s}}+u$ in the
Kuramoto dynamics in Fourier space yields the following evolutionary
equation
\begin{equation} \label{KF_condensed}
\partial_t u=L_1u+L_2u+Qu,
\end{equation}
\[
\text{where}\quad (L_1u)_\ell=\ell\left(\partial_\tau u_\ell+\frac{K r_\text{s}}2\left(u_{\ell-1}-u_{\ell+1}\right)\right)
\quad
\text{and}
\quad
(L_2u)_\ell=\frac{K \ell}2\left(u_1(0)(\widehat{f_\text{s}})_{\ell-1}-\overline{u_1(0)}(\widehat{f_\text{s}})_{\ell+1}\right),
\]
and the operator $Q$ collects the nonlinear terms
\[
(Qu)_\ell=\frac{K \ell}2\left(u_1(0)u_{\ell-1}-\overline{u_1(0)}u_{\ell+1}\right).
\]
Naturally, for $r_\text{s}=0$ and
$\widehat{f_\text{s}}=\widehat{f_\text{hom}}$, these equations
describe the perturbation dynamics around $f_\text{hom}$. In this
case, we have the simplification
$(L_1u)_\ell=\ell\partial_\tau u_\ell$ and
$(L_2u)_\ell=\frac{K}2u_1(0)\widehat{g}\delta_{\ell,1}$, while $Q$
remains unchanged.

Prior to any other consideration, this perturbation dynamics needs to
be granted well-posed in the weighted norm setting. In this respect,
Proposition 3.1 in \cite{DFG-V16} claims that the subset of measures
with Fourier transforms in ${\mathcal H}^1_{e^{a\tau}}(\N\times\R)$
has well-defined Kuramoto dynamics and is invariant under the flow
(see \cite{D17} for a similar well-posedness result in the algebraic
setting).

Moreover, one needs to incorporate the fact that
$L_2$ is only $\R$-linear and not $\C$-linear if
$\widehat{f_\text{s}}\neq \widehat{f_\text{hom}}$. One way to proceed
is to treat the real and imaginary components separately
\cite{D17,MS07,OW13}. Here, we adopt a different but equivalent
approach that substitutes complex conjugates by an independent
variable. Given $u=\{u_\ell(\tau)\}_{\N\times\R}$ and
$v=\{v_\ell(\tau)\}_{\N\times\R}$ (which is a substitute for
$\bar{u}$), let
\[
\text{\sl u}=\{\text{\sl u}_\ell(\tau)\}_{\N\times\R}\quad \text{where}\quad \text{\sl u}_\ell(\tau)={u_\ell(\tau) \choose v_\ell(\tau)}\in\C^2,\ \forall (\ell,\tau)\in\N\times\R
\]
and consider the $\C$-linear operators ${\cal L}_i$ ($i=1,2$) defined by
\[
({\cal L}_1\text{\sl u})_\ell(\tau)={(L_1u)_\ell(\tau)\choose (L_1v)_\ell(\tau)}
\quad\text{and}\quad
({\cal L}_2\text{\sl u})_\ell(\tau)=\frac{K}2
\left(\begin{matrix}(u_{\text{s},-})_\ell(\tau)&-(u_{\text{s},+})_\ell(\tau)\\[4pt]
-\overline{(u_{\text{s},+})_\ell(\tau)}&\overline{(u_{\text{s},-})_\ell(\tau)}\end{matrix}\right)\text{\sl u}_1(0)
\]
using the notations $(u_{\text{s},-})_\ell=\ell (\widehat{f_\text{s}})_{\ell-1}$ and $(u_{\text{s},+})_\ell=\ell (\widehat{f_\text{s}})_{\ell+1}$.
These extended operators are defined in such a way that when $v_\ell=\overline{u_\ell}$, we have
\[
({\cal L}_i\text{\sl u})_\ell(\tau)=\left(\begin{matrix}(L_iu)_\ell(\tau)\\[3pt]
\overline{(L_iu)_\ell(\tau)}\end{matrix}\right),\ \text{for}\ i=1,2.
\]
Moreover, it can be checked that this extension does not generate unstable spurious modes \cite{DFG-V16}.

Considerations on its resolvent in
${\mathcal H}^1_{e^{a\tau}}(\N\times\R)$ imply that $L_1$ generates a
$C^0$-semigroup in this space \cite{DFG-V16}. In case of
$r_\text{s}=0$, the semigroup is the free transport , namely
\[
(e^{tL_1}u)_\ell(\tau)=u_\ell(\tau+\ell t),\ \forall (t,\ell,\tau)\in \R^+\times\N\times\R.
\]
In ${\mathcal H}^1_{(1+\tau^b)}(\N\times\R)$, the semigroup admits
weak solutions and this is enough for our purpose. The same properties
directly follow for the semigroup of ${\cal L}_1$ in the corresponding
product spaces.

Both $u_{\text{s},-}$ and $u_{\text{s},+}$ belong to
${\mathcal H}^1_{e^{a\tau}}(\N\times\R)$ \cite[Proposition
A.2]{DFG-V16}; hence the operator ${\cal L}_2$ must be
bounded. Therefore, ${\cal L}_1+{\cal L}_2$ similarly generates a
$C^0$-semigroup. In the product space associated with
${\mathcal H}^1_{(1+\tau^b)}(\N\times\R)$, this semigroup is also
well-defined \cite{D17}.

When regarding $t\mapsto {\cal L}_2\text{\sl u}(t)$ as a forcing term in the linearized PDE
\[
\partial_t \text{\sl u}={\cal L}_1\text{\sl u}+{\cal L}_2\text{\sl u},
\]
Duhamel's principle implies that the solution writes
\[
\text{\sl u}(t)=e^{t{\cal L}_1}\text{\sl u}(0)+\int_0^te^{(t-s){\cal L}_1}{\cal L}_2\text{\sl u}(s)ds,\ \forall t\in\R^+.
\]
Using the linearity of $e^{tL_1}$ and the expression of ${\cal L}_2$,
a self-consistent equation results for the coordinate
$(\ell,\tau)=(1,0)$ of the solution's component
$\text{\sl u}(t)=\{\text{\sl u}_\ell(t,\tau)\}$. This equation is the
following Volterra equation
\begin{equation}
\text{\sl u}_1(t,0)-\left({\cal K}\ast \text{\sl u}_1(\cdot,0)\right)(t)=I(t),
\label{VOLTEQ}
\end{equation}
where the two-dimensional convolution by the kernel ${\cal K}$ is defined by
\[
\left({\cal K}\ast \text{\sl u}_1(\cdot,0)\right)(t)=\int_0^t{\cal K}(t-s)\text{\sl u}_1(s,0)ds
\quad
\text{with}
\quad
{\cal K}(t)=\frac{K }2
\left(\begin{matrix}(e^{tL_1}u_{\text{s},-})_1(0)&-(e^{tL_1}u_{\text{s},+})_1(0)\\[4pt]
-\overline{(e^{tL_1}u_{\text{s},+})_1(0)}&\overline{(e^{tL_1}u_{\text{s},-})_1(0)}\end{matrix}\right),
\]
and where $I(t)=(e^{t{\cal L}_1}\text{\sl u}(0))_1(0)$ is regarded as an input (which contains the initial perturbation). In particular, for inputs with conjugated initial components $\{v_\ell(0,\tau)\}=\{\overline{u_\ell(0,\tau)}\}$, we have $\text{\sl u}_1(t,0)={\displaystyle {\overline{r(t)}\choose r(t)}}$ and \eqref{VOLTEQ} describes the linearized evolution of the perturbation order parameter $r(t)$. In the case $r_\text{s}=0$, we have the simplification $(u_{\text{s},-})_\ell=\widehat{g}\delta_{\ell,0}$ and $(u_{\text{s},+})_\ell=0$, and the order parameter linearized trajectory is governed by the following one-dimensional Volterra equation
\begin{equation}
\overline{r(t)}-\frac{K}2(\widehat{g}\ast r)(t)=u_1(0,t)
\label{SOLVOLT1D}
\end{equation}
where $\ast$ now denotes the standard convolution of complex functions.

\subsection{Asymptotic decay of solutions and for stability conditions}
Volterra equations have unique and explicit solutions provided that
their kernel and forcing are locally bounded (Sect.\ 3, Chap.\ 2 in
\cite{GLS90}). In the case of \eqref{VOLTEQ}, these properties are
granted by the fact that $e^{tL_1}$ is itself locally bounded either
in ${\mathcal H}^1_{e^{a\tau}}(\N\times\R)$ or in
${\mathcal H}^1_{(1+\tau)^b}(\N\times\R^+)$, together with properties
of the states $u_{\text{s},-}$ and $u_{\text{s},+}$. The solution then
writes
\[
\text{\sl u}_1(t,0)=I(t) + ({\cal R}_{\cal K}\ast I)(t)\quad \text{where}\quad  {\cal R}_{\cal K}=\sum_{k=1}^{+\infty}{\cal K}^{\ast k}
\]
and the definition of the resolvent ${\cal R}_{\cal K}$ relies on the induction ${\cal K}^{\ast (k+1)}={\cal K}\ast{\cal K}^{\ast k}$ with ${\cal K}^{\ast 1}={\cal K}$.

\subsubsection{Analysis of the one-dimensional Volterra equation}
Let ${\cal R}_{\frac{K}2\widehat{g}}$ be the resolvent of the convolution by $\frac{K}2\widehat{g}$. The equation \eqref{SOLVOLT1D} has the solution
\[
\overline{r(t)}=u_1(0,t)+({\cal R}_{\frac{K}2\widehat{g}}\ast u_1(0,\cdot))(t)
\]
whose asymptotic properties are readily accessible, using the following weighted norm
\[
\|u\|_{L_{\phi}^\infty(\R^+)}=\text{ess\ sup}_{t\in\R^+}\phi(t) |u(t)|\quad \text{where}\quad \phi:\R^+\to\R^+.
\]
The weights $\phi(t)=e^{at}$ and $\phi(t)=(1+t)^b$ are
sub-multiplicative functions. Together with Young's inequality, this
property implies the following inequality \cite{D16a}
\[
\|r\|_{L_{\phi}^\infty(\R^+)}\leq \left(1+\|{\cal R}_{\frac{K}2\widehat{g}}\|_{L_{\phi}^1(\R^+)}\right)\|u_1(0,\cdot)\|_{L_{\phi}^\infty(\R^+)}.
\]
Therefore, if we can ensure that
$\|{\cal R}_{\frac{K}2\widehat{g}}\|_{L_{\phi}^1(\R^+)}<+\infty$, then
quantified order parameter decay
$\|r\|_{L_{\phi}^\infty(\R^+)}<+\infty$ will follow from a similar
feature $\|u_1(0,\cdot)\|_{L_{\phi}^\infty(\R^+)}<+\infty$ of the
initial perturbation (By Sobolev embedding, the latter holds provided
that $u(0)\in {\mathcal H}^1_\phi(\N\times\R^+)$). In particular, the
conclusions in Proposition \ref{LANDAUDAMP} for the linear dynamics
will be instances of this property, when applied to the exponential
and polynomial weights, respectively.

It remains to connect the constraint
$\|{\cal R}_{\frac{K}2\widehat{g}}\|_{L_{\phi}^1(\R^+)}<+\infty$ to
the condition \eqref{STABHOMOG} in each case. This equivalence is
given by the half-line Gelfand theorem (Theorem 4.3, Chapter 4 in
\cite{GLS90}). Indeed, since $\phi$ is sub-multiplicative and the
measure $\widehat{g}(t)dt$ is absolutely continuous, this statement
implies that the desired constraint holds under the conditions
$\widehat{g}\in L^1_{\phi}(\R^+)$ and
\begin{equation}
\frac{K}2\int_{\R^+}\hat{g}(t)e^{-zt}dt\neq 1,\ \forall z\in\C\ :\ \text{Re}(z)\geq -\lim_{t\to+\infty}\frac{\ln\phi(t)}{t}.
\label{HALFLINE}
\end{equation}

The conditions of Prop.\ \ref{LANDAUDAMP} {\sl (i)} then immediately follow. In the exponential case, the constraint \eqref{HALFLINE} appears more stringent than  \eqref{STABHOMOG} (because $-{\displaystyle\lim_{t\to+\infty}\frac{\ln\phi(t)}{t}}<0$).
To see that the conditions of Prop.\ \ref{LANDAUDAMP} {\sl (ii)} suffice, observe that $\|\widehat{g}\|_{L^1_{e^{a\tau}}(\R^+)}<+\infty$ implies that the Laplace transform
\[
z\mapsto \int_{\R^+}\hat{g}(t)e^{-zt}dt
\]
is holomorphic in every half-plane $\text{Re}(z)>-a$ and continuous up
to the boundary $\text{Re}(z)=-a$. By the Riemann-Lebesgue lemma, this
function must be uniformly small outside a sufficiently large
rectangular region of the form
\[
\text{Re}(z)\in [-a,A],\ |\text{Im}(z)|\leq B
\]
In particular, it cannot reach the value $\frac{2}{K}$ outside this
domain. By analyticity it can only reach this value at finitely many
points with $\text{Re}(z)>-a$. Assuming \eqref{STABHOMOG}, each of
these points must satisfy $\text{Re}(z)<0$. Therefore, all these
points must satisfy $\text{Re}(z)\leq -a'$ for some $a'\in (0,a)$. It
follows that \eqref{STABHOMOG} implies \eqref{HALFLINE} for
$\phi(t)=e^{a't}$, as desired.

\subsubsection{Analysis of the two-dimensional Volterra equation}
That PLS come in circles of stationary solutions implies that the
linearized dynamics at $\widehat{f_\text{s}}$ should be neutral with
respect to perturbations that are tangent to the circle
\cite{MS07}. In fact, we have \cite{DFG-V16}
\[
(L_1+L_2)u=0\ \text{for}\ u=\frac{d\widehat{R}_\Theta}{d\Theta}\big{|}_{\Theta=0}\widehat{f_\text{s}},\ \text{ie.}\ u_\ell=i\ell (\widehat{f_\text{s}})_\ell.
\]
As a consequence, the solution of \eqref{VOLTEQ} cannot be decaying when the initial input lies along the corresponding 0-eigenmode of ${\cal L}_1+{\cal L}_2$ in the product space. Asymptotic decay can only hold for solutions whose input is initially transversal to this direction. In order to integrate this constraint, one should require that the analogue condition to \eqref{HALFLINE} excludes the eigenvalue 0, ie.
\[
\det \left(\text{Id}-\int_{\R^+}{\cal K}(t)e^{-zt}dt\right)\neq 0,\ \forall z\neq 0\ :\ \text{Re}(z)\geq 0.
\]
Using that the Laplace transform of a semigroup is the resolvent of its generator and proceeding with algebraic manipulations on this resolvent \cite[Lemma 4.4]{DFG-V16}, yield the equality
\[
\int_{\R^+}{\cal K}(t)e^{-zt}dt=\frac{K}2M(z,r_\text{s})
\]
and the first constraint in \eqref{STABCOND} follows suit. Together
with imposing that 0 is a simple eigenvalue (second constraint in
\eqref{STABCOND}) and the conditions on $\widehat{g}$ in Theorem
\ref{SOBOLEVPLS} (resp.\ \ref{ANALYTPLS}), another result in the
theory of Volterra equations (Theorem 3.7, Chapter 7 in \cite{GLS90}
and subsequent comment) implies that the resolvent writes
\[
R_{\cal K}(t)=C+q(t),
\]
where $C$ is the constant $2\times2$ matrix corresponding to the 0-eigenmode above and where
\[
\|q\|_{L^1_\phi(\R^+)}<+\infty
\]
for $\phi(t)=(1+t)^b$ (resp.\ $\phi(t)=e^{at}$). The desired damping for transversal perturbations then results from the following inequality (obtained using a similar reasoning as above)
\[
\|r\|_{L_{\phi}^\infty(\R^+)}\leq \left(1+\|q\|_{L_{\phi}^1(\R^+)}\right)\|I(t)\|_{L_{\phi}^\infty(\R^+)}
\]
where $I(t)$ now denotes the first component of the input $(e^{t{\cal L}_1}\text{\sl u}(0))_1(0)$ when assuming conjugated components $\{v_\ell(0,\tau)\}=\{\overline{u_\ell(0,\tau)}\}$ in the initial perturbation $\text{\sl u}(0)$.

Notice finally that, unlike in the previous section, estimates on $\|I(t)\|_{L_{\phi}^\infty(\R^+)}$ are not immediate here, even when $\|u_1(0,\cdot)\|_{L_{\phi}^\infty(\R^+)}<+\infty$. In the exponential case, these estimates follow from the semigroup exponential stability in ${\mathcal H}^1_{e^{a\tau}}(\N\times\R)$ \cite{DFG-V16}, namely
\[
\|e^{tL_1}\|_{{\mathcal H}_{e^{a\tau}}^1(\N\times \R)}=O(e^{-a''t})
\]
for some $a''\in (0,a)$; hence the rate $a'<a''$ in Theorem \ref{ANALYTPLS}, when combined with \eqref{STABCOND} and similar analycity arguments to those in the previous section. In the algebraic case, no such property can exist for $e^{tL_1}$ in ${\mathcal H}^1_{(1+\tau)^b}(\N\times\R^+)$ (otherwise, we would have  exponential decay in this space).
Instead, an energy estimate yields the following inequality (see Lemma 4 in \cite{D17})
\[
  \|e^{tL_1}u\|^2_{{\mathcal H}^1_{(c+t+\tau)^b}(\N\times\R^+)}
  + \int_0^t|(e^{sL_1}u)_1(0)|^2(c+s)^{2b}ds
  \leq \|u\|^2_{{\mathcal H}^1_{(c+\tau)^b}(\N\times\R^+)},
  \ \forall c\geq 1,t\in\R^+
\]
and algebraic decay follows from a control of the integral using the Cauchy-Schwarz inequality.

\subsection{Control of nonlinear terms in the exponential case}
With full understanding at the linear level, nonlinearities remain to
be accounted for. Here, focus is made on PLS. Similar considerations
apply for $f_\text{hom}$. The fact that PLS come in circles requires
to get rid of the angular coordinate and to consider the radial
dynamics only \cite{HI11}. It can be shown that the Fourier transform
$\widehat{f}$ of any measure close enough to
$\{\widehat{R}_\Theta \widehat{f_\text{s}}\}_{\Theta\in\T^1}$ can be
written
\[
\widehat{f}=\widehat{R}_\Theta \left(\widehat{f_\text{s}}+u\right)\quad \text{where}\quad (\Theta,u)\in\T^1\times P_s({\mathcal H}^1_{e^{a\tau}}(\N\times \R))
\]
and $P_s$ is an appropriate projection on the complement of $\text{Ker}(L_1+L_2)$ \cite{DFG-V16}.
By inserting this expression in \eqref{KF_condensed} and by applying $P_s$, the following nonlinear equation results
\begin{equation}
\partial_t u=L_1u+L_2u+P_sQ'u
\label{PERTURB2}
\end{equation}
where $Q'$ is an updated nonlinear term, independent of the angular variable $\Theta$. In the case of $f_\text{hom}$, no projection is needed, and the considerations below apply {\sl mutatis mutandis} to \eqref{KF_condensed}.

The previous section showed that \eqref{STABCOND} implies in the exponential case that the semigroup $e^{t(L_1+L_2)}$ is exponentially stable, namely
\[
\|e^{t(L_1+L_2)}\|_{P_s({\mathcal H}^1_{e^{a\tau}}(\N\times \R))}=O(e^{-a't}).
\]
In a standard proof of sink asymptotic stability, the nonlinear terms
are assumed to be sufficiently regular, say $C^2$, so that for
sufficiently small perturbations, they can be dominated by the linear
exponential stability and exponential decay of the full system
solution follows.

Unfortunately, the nonlinearity $Q$ (and hence $Q'$) is not regular at all in ${\mathcal H}^1_{e^{a\tau}}(\N\times \R)$; in fact it does not even map this space into itself. Instead, we have
\[
Q:{\mathcal H}^1_{e^{a\tau},0}(\N\times \R)\mapsto {\mathcal H}^1_{e^{a\tau},1}(\N\times \R)\quad\text{where}\quad
{\mathcal H}^1_{\phi,k}(\N\times\R)=\{u\in \C^{\N\times\R}\ :\ \|u\|_{{\mathcal H}^1_{\phi,k}(\N\times\R)}<+\infty\}
\]
and the new norm is an extension of the one introduced in Section \ref{S-RESULTS}
\begin{equation}
\|u\|_{{\mathcal H}^1_{\phi,k}(\N\times\R)}=\left(\sum_{\ell\in\N}\int_\R \ell^{2k}\phi(\tau)^2\left(|u_\ell(\tau)|^2+|u'_\ell(\tau)|^2\right)d\tau\right)^{\frac12}.
\label{EXTRAN}
\end{equation}
Nonetheless, the linear terms \eqref{PERTURB2} have enough
regularizing effect to dominate nonlinearities. In fact, when
regarding the nonlinearity in \eqref{PERTURB2} as a forcing term, the
following adaptation of the Gearhart-Pr\"uss Theorem shows asymptotic
decay.  Given an Hilbert space $H$ with norm $\|\cdot\|_H$, a number
$\gamma\in\R^+$, and a mapping $w:\R\to H$, consider the norm defined
by
\[
\|w\|_{H,\gamma}=\left(\int_{\R^+}e^{2\gamma t}\|w(t)\|_{H}^2dt\right)^{\frac12}.
\]
\begin{Lem} {\rm \cite{DFG-V16}} Let $X\hookrightarrow Y$ be Hilbert
  spaces and $A$ be a densely defined linear operator that generates a
  $C^0$-semigroup on both $X$ and $Y$. Assume the existence of
  $\gamma\in\R^+$ such that the resolvent of $A$ over both spaces
  contains the half-plane $\text{Re}(\lambda) \geq -\gamma$ and
  satisfies
  \begin{equation*}
    \sup_{y \in \R}  \| ((-\gamma + i y) \text{\rm Id} - A)^{-1}
    \|_{Y \to X} <+\infty.
  \end{equation*}
  Then the unique mild solution $w \in C(\R^+,Y)$ of the initial value problem
  \[
    \frac{dw}{dt}=Aw+G
  \]
  assuming $\|G\|_{Y,\gamma}<+\infty$ and $\|w_\text{\rm in}\|_X<+\infty$ for $w(0)=w_\text{\rm in}$, has the following properties
  \begin{itemize}
  \item[$\bullet$] $w(t) \in X$ for a.e.\ $t \in \R^+$
  \item[$\bullet$] $\|w\|_{X,\gamma}\leq C\left(\|w_\text{in}\|_X +\|G\|_{Y,\gamma}\right)$ for some $C\in\R^+$.
  \end{itemize}
\end{Lem}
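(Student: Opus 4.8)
The plan is to remove the exponential weight by the substitution $\tilde w(t)=e^{\gamma t}w(t)$, which turns the weighted norms $\|\cdot\|_{H,\gamma}$ into plain $L^2(\R^+;H)$ norms and transforms the equation into $\tfrac{d\tilde w}{dt}=B\tilde w+\tilde G$ with $B=A+\gamma\,\mathrm{Id}$, $\tilde G(t)=e^{\gamma t}G(t)$ and $\tilde w(0)=w_{\text{in}}$. Since $B$ generates the semigroup $t\mapsto e^{\gamma t}e^{tA}$ on both $X$ and $Y$, the hypotheses become: the closed half-plane $\{\mathrm{Re}\,\lambda\ge 0\}$ lies in the resolvent set of $B$ over both spaces, and $\sup_{y\in\R}\|(iy\,\mathrm{Id}-B)^{-1}\|_{Y\to X}=:M<\infty$. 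The goal reduces to proving $\tilde w\in L^2(\R^+;X)$ together with $\|\tilde w\|_{L^2(\R^+;X)}\le C(\|w_{\text{in}}\|_X+\|\tilde G\|_{L^2(\R^+;Y)})$.

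I would then split the mild solution through Duhamel's formula into $\tilde w=w_h+w_p$, with $w_h(t)=e^{tB}w_{\text{in}}$ and $w_p(t)=\int_0^t e^{(t-s)B}\tilde G(s)\,ds$, and treat the two pieces by different means. For the homogeneous part I would first upgrade the resolvent bound: because the embedding $X\hookrightarrow Y$ is continuous and the resolvents over $X$ and $Y$ are consistent, one has $\|(iy\,\mathrm{Id}-B)^{-1}\|_{X\to X}\le M\,\|\iota\|_{X\to Y}$ uniformly in $y$, where $\iota$ denotes the embedding. With $i\R$ in the resolvent set over $X$ and this uniform bound, the Hilbert-space Gearhart--Pr\"uss theorem yields exponential stability of $e^{tB}$ on $X$, i.e.\ $\|e^{tB}\|_{X\to X}\le M'e^{-\epsilon t}$ for some $\epsilon>0$; integrating then gives $w_h\in L^2(\R^+;X)$ with $\|w_h\|_{L^2(\R^+;X)}\le (M'/\sqrt{2\epsilon})\,\|w_{\text{in}}\|_X$.

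For the particular part I would exploit the Hilbert structure directly. Extending $\tilde G$ by zero for $t<0$, the function $w_p$ is the convolution of $\tilde G$ with the kernel $t\mapsto e^{tB}\mathbf 1_{\{t\ge0\}}$, which decays exponentially by the previous step. Taking the vector-valued Fourier transform on $\R$ turns the convolution into the pointwise product $\widehat{w_p}(y)=(iy\,\mathrm{Id}-B)^{-1}\widehat{\tilde G}(y)$, since the Fourier transform of the kernel on the imaginary axis is exactly the resolvent. Applying the $Y\to X$ bound pointwise gives $\|\widehat{w_p}(y)\|_X\le M\,\|\widehat{\tilde G}(y)\|_Y$, and Plancherel's identity in the Hilbert spaces $X$ and $Y$ then yields $\|w_p\|_{L^2(\R^+;X)}\le M\,\|\tilde G\|_{L^2(\R^+;Y)}$. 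Summing the two contributions and undoing the $\gamma$-shift produces $w(t)\in X$ for a.e.\ $t$ together with the claimed estimate $\|w\|_{X,\gamma}\le C(\|w_{\text{in}}\|_X+\|G\|_{Y,\gamma})$.

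The delicate point is not the chain of inequalities but the rigorous justification of the transform manipulations in the Bochner/Hilbert-valued setting. One must verify that the mild solution, a priori only continuous into $Y$, is regular enough for the Laplace/Fourier transforms to be defined and for the identity ``Fourier transform of the semigroup kernel $=$ resolvent on $i\R$'' to hold; the exponential decay obtained in the Gearhart--Pr\"uss step is precisely what makes this convergence legitimate. One also has to confirm the consistency of the two resolvents and of the two semigroups under the embedding $X\hookrightarrow Y$, so that the $Y\to X$ bound can be read back as a statement about the $X$-valued trajectory. I would handle these by first proving the estimate for $\tilde G$ in a dense class (say smooth and compactly supported) and $w_{\text{in}}\in\mathrm{dom}(B)$, where all transforms are classical, and then passing to the limit using the uniform constant $C$. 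The remaining Young/Plancherel bookkeeping is routine.
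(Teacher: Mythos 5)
The paper itself does not prove this Lemma; it is quoted from \cite{DFG-V16}, where the proof is a Fourier--Laplace/Plancherel argument carried out entirely on the critical line $\text{Re}\,\lambda=-\gamma$. Your treatment of the particular part reproduces exactly that mechanism -- pointwise application of the $Y\to X$ resolvent bound to $\widehat{\tilde G}(y)$ followed by Plancherel in the Hilbert-space-valued $L^2$ spaces, with the transform identities first justified on a dense class -- and that half of your argument is sound, including the flagged consistency issues.

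The gap is in the homogeneous part. From the hypotheses you only obtain a uniform bound on $\|(iy\,\text{Id}-B)^{-1}\|_{X\to X}$ \emph{on the imaginary axis}, plus the information that the closed right half-plane lies in the resolvent set. The Gearhart--Pr\"uss theorem, however, requires $\sup_{\text{Re}\,\lambda>0}\|(\lambda\,\text{Id}-B)^{-1}\|_{X\to X}<+\infty$ over the \emph{whole open half-plane}; the variant needing only the bound on $i\R$ assumes the semigroup is a priori bounded on $X$, which is not hypothesized here. A Neumann-series argument extends your axis bound only to a strip $0\le\text{Re}\,\lambda\le\delta$ with $\delta\sim 1/M$, and the Laplace representation controls the resolvent only for $\text{Re}\,\lambda$ beyond the growth bound; the intermediate region is not covered, so ``exponential stability of $e^{tB}$ on $X$'' is asserted rather than proved -- and you use that decay twice (to integrate $w_h$ and to legitimize the Fourier transform of the kernel). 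The repair, which is what the reference does, is to treat $w_h$ by the same single-line Plancherel argument as $w_p$: for $\omega$ beyond the growth bounds, Plancherel applied to the Laplace representation of the semigroups gives $\int_\R\|R(\omega+iy)x\|_X^2\,dy\lesssim\|x\|_X^2$ and $\int_\R\|R(\omega+iy)x\|_Y^2\,dy\lesssim\|x\|_Y^2$ for free; the resolvent identity $R(iy)x=R(\omega+iy)x+\omega\,R(iy)R(\omega+iy)x$ together with the $Y\to X$ bound then transfers square-integrability to the axis with a constant depending only on $\|x\|_X$, and Paley--Wiener converts this back into $e^{\gamma t}e^{tA}w_{\text{in}}\in L^2(\R^+;X)$. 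This bypasses any claim of operator-norm decay of the semigroup and stays strictly within the stated hypotheses.
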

In short terms, under a suitable property of the resolvent, asymptotic
decay of the solution can be ensured in $X$, even though control of
the forcing only holds in the larger space $Y$.  Now, one checks that
the forced linear equation associated with \eqref{PERTURB2} satisfies
the conditions of this Lemma, in appropriate product spaces, with
$A={\cal L}_1+{\cal L}_2$ \cite{DFG-V16}. Therefore, its solution
satisfies
\[
\|u\|_{{\mathcal H}^1_{e^{a\tau}}(\N\times \R),a'}<+\infty.
\]
To conclude the proof, it remains to show, using a localization
procedure, that this $L^2$-control in time implies $L^\infty$-control
for the original nonlinear equation, see section 5.4 in
\cite{DFG-V16}.

\section{Convergence to the Ott-Antonsen manifold}\label{S-OA}
In addition to the basic characteristics listed in the Introduction, \eqref{KPDE} has another remarkable feature. Its solutions asymptotically approach the so-called Ott-Antonsen (OA) manifold, namely the set of measures for which the Fourier transform $\{\widehat{f}_\ell(\tau)\}_{(\ell,\tau)\in \N\times\R}$ satisfies \cite{OA08},
\[
\widehat{f}_\ell=h^{\ast \ell}\ast \widehat{g},\ \forall \ell\in\N\cup\{0\},
\]
where $g$ is the frequency marginal associated with $f$, $h:\R\to\C$ is arbitrary, and $\ast$ now denotes the convolution on the whole line, ie.
\[
(u\ast v)(\tau) = \int_{\R} u(\tau-\sigma)v(\sigma)d\sigma,\ \forall  \tau\in\R.
\]
and $h^{\ast (\ell+1)}=h\ast h^{\ast \ell}$ for $\ell\in\Z^+$, where $h^{\ast 0}$ is the Dirac distribution.

Several motivations for the OA manifold have been given in the
literature. As mentioned in the Introduction, the dynamics in this set
is a finite-dimensional system when $g$ is meromorphic with finitely
many poles in the lower half-plane \cite{MBSOSA09,OA08} (see also
Section 5.6.2 in \cite{D16b}). Moreover, this set captures the order
parameter dynamics \cite{OA09,OHA11}. In addition, it selects suitable
candidates for PLS stability, namely $f_\text{s}$ is the only PLS
$f_\text{pls}$ contained in this set. Finally, when evaluated in the
OA manifold, the corresponding PLS stability condition results to be
identical to \eqref{STABCOND} \cite{DFG-V16,OW13}.  Here, we provide a
full proof that the OA manifold is a global attractor for appropriate
measures. In order to formulate the statement, we first observe that
this set can be regarded as the set of measures for which all
functions
\[
w_{n,m}=\widehat{f}_{n+m}\ast\widehat{g}-\widehat{f}_n\ast\widehat{f}_m,\ \forall n,m\in\N\cup\{0\}
\]
identically vanish. Accordingly the distance to this set will be evaluated using
\[
\|w\|_{{\mathcal H}^1_{e^{a\tau}}(\N^2\times\R)}=\left(\sum_{n,m\in\N}\int_\R \frac{e^{2a\tau}}{nm}\left(|w_{n,m}(\tau)|^2+|w'_{n,m}(\tau)|^2\right)d\tau\right)^{\frac12}.
\]
\begin{Pro}
Assume that $f(0)$ is such that $\|w(0)\|_{{\mathcal H}^1_{e^{a\tau}}(\N^2\times\R)}<+\infty$ for some $a>0$ and that the corresponding global solution $t\mapsto f(t)$ exists. Then $\|w(t)\|_{{\mathcal H}^1_{e^{a\tau}}(\N^2\times\R)}<+\infty$ for all $t\in\R^+$ and
\[
\|w(t)\|_{{\mathcal H}^1_{e^{a\tau}}(\N^2\times\R)}\leq \|w(0)\|_{{\mathcal H}^1_{e^{a\tau}}(\N^2\times\R)}\ e^{-at} .
\]
In particular, the following limit holds
\[
\lim_{t\to +\infty}w_{n,m}(t,\tau)=0,\ \forall n,n\in\N, \tau\in\R.
\]
\label{ATTRACTOA}
\end{Pro}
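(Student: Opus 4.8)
The plan is to show that the defect variables $w_{n,m}$ satisfy, on their own, a \emph{closed linear} evolution equation, and then to extract exponential decay from a weighted energy estimate in which the free transport term, paired with the exponential weight, supplies all the dissipation.

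First I would write the Kuramoto dynamics in Fourier space. A direct computation from \eqref{KPDE} gives, for every $\ell$,
\[
\partial_t\widehat f_\ell=\ell\,\partial_\tau\widehat f_\ell-\frac{K\ell}2\left(r\,\widehat f_{\ell+1}-\bar r\,\widehat f_{\ell-1}\right),
\]
where $r(t)=\overline{\widehat f_1(t,0)}$ is the order parameter; in particular $\widehat f_0=\widehat g$ is constant in $t$. I would then differentiate $w_{n,m}=\widehat f_{n+m}\ast\widehat g-\widehat f_n\ast\widehat f_m$ in time. Since $\partial_\tau$ commutes with $\ast$ and may be moved onto either factor, the transport contributions collapse to $(n+m)\,\partial_\tau w_{n,m}$. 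For the coupling contributions, the crucial algebraic step is to re-express the off-diagonal convolutions through the defects themselves, via $\widehat f_{n+1}\ast\widehat f_m=\widehat f_{n+m+1}\ast\widehat g-w_{n+1,m}$ and its three analogues. After substitution the terms proportional to $\widehat f_{n+m\pm1}\ast\widehat g$ cancel exactly, leaving
\[
\partial_t w_{n,m}=(n+m)\,\partial_\tau w_{n,m}-\frac{Kr}2\left(n\,w_{n+1,m}+m\,w_{n,m+1}\right)+\frac{K\bar r}2\left(n\,w_{n-1,m}+m\,w_{n,m-1}\right).
\]
This is a \emph{linear} system in $w$, with $r(t)$ entering only as a time-dependent coefficient; it already proves invariance of the OA manifold, since $w\equiv0$ is preserved. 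I would also record that $w_{0,m}=w_{n,0}=0$, so only indices $n,m\in\N$ matter.

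Next I would run the energy estimate for $\|w\|_{{\mathcal H}^1_{e^{a\tau}}(\N^2\times\R)}^2$. Because $\partial_\tau$ commutes with the whole system, $\partial_\tau w_{n,m}$ solves the same equation, so it suffices to treat the $L^2$ part $\sum_{n,m}\frac1{nm}\int e^{2a\tau}|w_{n,m}|^2\,d\tau$ and add the identical derivative contribution at the end. Differentiating this quantity in $t$ and inserting the equation, the transport term produces $\sum_{n,m}\frac{n+m}{nm}\int e^{2a\tau}\partial_\tau|w_{n,m}|^2\,d\tau$; integrating by parts in $\tau$ (the boundary terms vanish because finiteness of the norm forces $e^{a\tau}w_{n,m}\in H^1(\R)\hookrightarrow C_0(\R)$) yields $-2a\sum_{n,m}\frac{n+m}{nm}\int e^{2a\tau}|w_{n,m}|^2\,d\tau$. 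The coupling terms cancel: shifting the lattice index in the $w_{n\pm1,m}$ pair and using $w_{0,m}=0$ turns the two contributions into $-\kappa X+\bar\kappa\bar X$ with $X=\bar w_{n,m}w_{n+1,m}$ and $\kappa=\tfrac{Kr}2$, whose real parts are opposite and cancel (and likewise for the $w_{n,m\pm1}$ pair). Remarkably this cancellation is insensitive to the size of $r(t)$, so the rate is dictated purely by transport. Since $\frac{n+m}{nm}\geq\frac1{nm}$ for $n,m\in\N$, I obtain $\frac{d}{dt}\|w\|^2\leq-2a\|w\|^2$, and Grönwall gives $\|w(t)\|\leq\|w(0)\|\,e^{-at}$; propagation of finiteness and the pointwise limit follow at once.

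The step I expect to be most delicate is rigor rather than algebra: the closure identity and the energy manipulations presuppose that the convolutions $\widehat f_n\ast\widehat f_m$ are well defined, that the system may be differentiated in $\tau$, and that the integrations by parts carry no boundary contribution. I would secure these by invoking the well-posedness and invariance results quoted earlier (Proposition 3.1 in \cite{DFG-V16}) to guarantee that the solution remains in the relevant weighted space, and, if necessary, by first establishing the differential inequality for smooth, rapidly decaying approximations and then passing to the limit. Everything else is mechanical, if careful, bookkeeping of the lattice indices.
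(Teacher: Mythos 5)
Your proposal is correct and follows essentially the same route as the paper: derive the closed linear evolution equation for $w_{n,m}$ (with $r(t)$ as a time-dependent coefficient), then run the weighted energy estimate in which the transport term paired with the weight $e^{2a\tau}$ yields the rate $-2a(n+m)\leq -2a$ after integration by parts, while the coupling terms cancel upon index shifting thanks to $w_{0,m}=w_{n,0}=0$. The only difference is that you spell out the "standard manipulations" (the closure identity and the cancellation of the coupling terms) that the paper leaves implicit.
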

\vspace*{-0.7cm}
\begin{proof}
Using the relations
\[
\partial_\tau w_{n,m}=\partial_\tau \widehat{f}_{n+m}\ast\widehat{g}-\partial_\tau \widehat{f}_n\ast\widehat{f}_m=\partial_\tau \widehat{f}_{n+m}\ast\widehat{g}-\widehat{f}_n\ast\partial_\tau \widehat{f}_m
\]
and the Kuramoto dynamics in Fourier space, the following evolutionary equation results
\[
\partial_t w_{n,m}=(n+m)\partial_\tau w_{n,m}+\frac{K}2\left(\overline{r(t)}\left(n w_{n-1,m}+mw_{n,m-1}\right)-r(t)\left(n w_{n+1,m}+m w_{n,m+1}\right)\right).
\]
Together with the relations $w_{0,m}\equiv w_{n,0}\equiv 0$, this equation yields after standard manipulations
\begin{align*}
\frac{d}{dt}\sum_{n,m\in\N}\int_\R \frac{e^{2a\tau}}{nm}|w_{n,m}(\tau)|^2d\tau&=2\sum_{n,m\in\N}\int_\R \frac{(n+m)e^{2a\tau}}{nm}\text{Re}\left(w_{n,m}(\tau)\overline{w'_{m,n}}(\tau)\right)d\tau\\
&=-2a\sum_{n,m\in\N}\int_\R \frac{(n+m)e^{2a\tau}}{nm}|w_{n,m}(\tau)|^2d\tau.
\end{align*}
The same computations hold with $w'_{n,m}$ instead of $w_{n,m}$. Adding the two results, it follows that
\[
\frac{d}{dt}\|w\|^2_{{\mathcal H}^1_{e^{a\tau}}(\N^2\times\R)}\leq -2a \|w\|^2_{{\mathcal H}^1_{e^{a\tau}}(\N^2\times\R)}
\]
from where the first part of the Proposition results. The second part is a direct consequence of the first result together with the Sobolev embedding ${\mathcal H}^1([-\tau,\tau])\hookrightarrow C_0([-\tau,\tau])$, for every $\tau\in\R^+$.
\end{proof}

We do not know if the conditions of the Proposition hold for every $f(0)$ such that $\widehat{f(0)}\in {\mathcal H}^1_{e^{a\tau}}(\N\times\R)$. Yet, the next statement provides sufficient conditions for the Proposition to apply.
\begin{Lem}
Suppose that $\|\hat{g}\|_{{\mathcal H}^1_{e^{a'\tau}}(\R^+)}<+\infty$ and $\|\widehat{f(0)}\|_{{\mathcal H}^1_{e^{a'\tau}}(\N\times\R)}<+\infty$ for all $a'\in [a-\epsilon,a+\epsilon]$ where $\epsilon>0$ is (arbitrarily) small. Then, $f(t)$ satisfies the assumptions of Proposition \ref{ATTRACTOA} for every $t>0$.
\end{Lem}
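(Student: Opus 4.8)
The plan is to verify the single standing hypothesis of Proposition \ref{ATTRACTOA}, namely $\|w(0)\|_{{\mathcal H}^1_{e^{a\tau}}(\N^2\times\R)}<+\infty$, directly from the assumed control of $\widehat{f(0)}$ and $\widehat{g}$. Once this is in hand, the conclusion of the proposition itself propagates finiteness of the norm to all later times, so that $\widehat{f(t)}$ (with $w(t)$ playing the role of its own ``$w(0)$'') again meets the hypothesis for every $t>0$; alternatively, since the frequency marginal is time-independent and the invariance of Proposition 3.1 in \cite{DFG-V16} applies in each space ${\mathcal H}^1_{e^{a'\tau}}(\N\times\R)$ with $a'\in[a-\epsilon,a+\epsilon]$, the assumed bounds persist along the flow and the estimate below can simply be re-run at any $t$. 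Everything thus reduces to the initial-time bound on $w_{n,m}=\widehat{f}_{n+m}\ast\widehat{g}-\widehat{f}_n\ast\widehat{f}_m$.

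The key algebraic observation is that the exponential weight factorizes through convolution on the line, $e^{a\tau}(u\ast v)=(e^{a\cdot}u)\ast(e^{a\cdot}v)$, and the same is true after differentiating since $\partial_\tau(u\ast v)$ may be made to act on either factor. Writing $\tilde v:=e^{a\cdot}v$, I would therefore bound both $\|e^{a\tau}w_{n,m}\|_{L^2}$ and $\|e^{a\tau}\partial_\tau w_{n,m}\|_{L^2}$ by Young's inequality $\|p\ast q\|_{L^2}\le\|p\|_{L^1}\|q\|_{L^2}$: in the marginal term $\widehat{f}_{n+m}\ast\widehat{g}$ I keep $\widehat{g}$ in $L^1$ and $\widehat{f}_{n+m}$ in $L^2$, while in the product term $\widehat{f}_n\ast\widehat{f}_m$ I keep one factor in $L^1$ and the other in $L^2$, letting the $\tau$-derivative (for the $\partial_\tau w_{n,m}$ estimate) fall on the $L^2$ factor. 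Summing against the weight $\tfrac1{nm}$ and using the symmetry $w_{n,m}=w_{m,n}$ factorizes the double sum into products of one-dimensional series; the marginal-term contribution is controlled by the elementary bound $\sum_{n+m=\ell}\tfrac1{nm}=O(\tfrac{\ln\ell}{\ell})=O(1)$, after which the $\ell$-sums of $\|\tilde f_\ell\|_{L^2}^2$ and $\|\tilde f_\ell'\|_{L^2}^2$ are dominated by $\|\widehat{f(0)}\|_{{\mathcal H}^1_{e^{a\tau}}}^2$ and $\|\tilde g\|_{L^1}$ enters as a constant factor.

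The crux — and the only place the \emph{range} $a'\in[a-\epsilon,a+\epsilon]$ is genuinely used — is the $L^1$ control of the reweighted coefficients, that is, the finiteness of $\sum_\ell\tfrac1\ell\|\tilde f_\ell\|_{L^1(\R)}^2$ and of $\|\tilde g\|_{L^1(\R)}$. The difficulty is that on $\R$ the weight $e^{a\tau}$ is not square-integrable against a merely bounded function, so $\|\tilde f_\ell\|_{L^1}$ cannot be bounded by the $L^2$-based norm at the \emph{same} exponent $a$. I would instead split $\R=\R^+\cup\R^-$ and borrow an $\epsilon$ of decay on each side: on $\R^+$, Cauchy--Schwarz gives $\int_{\R^+}e^{a\tau}|\widehat{f}_\ell|=\int_{\R^+}e^{-\epsilon\tau}\,e^{(a+\epsilon)\tau}|\widehat{f}_\ell|\le(2\epsilon)^{-1/2}\|e^{(a+\epsilon)\cdot}\widehat{f}_\ell\|_{L^2(\R^+)}$, and symmetrically on $\R^-$ with the exponent $a-\epsilon$, using that $e^{\epsilon\tau}\in L^2(\R^-)$. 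Squaring, summing $\tfrac1\ell$, and invoking the hypotheses at the exponents $a\pm\epsilon$ then yields $\sum_\ell\tfrac1\ell\|\tilde f_\ell\|_{L^1}^2\lesssim\epsilon^{-1}\big(\|\widehat{f(0)}\|_{{\mathcal H}^1_{e^{(a+\epsilon)\tau}}}^2+\|\widehat{f(0)}\|_{{\mathcal H}^1_{e^{(a-\epsilon)\tau}}}^2\big)<+\infty$, and the same device applied to $\widehat{g}$ bounds $\|\tilde g\|_{L^1}$.

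I expect this Cauchy--Schwarz splitting to be the main obstacle, in the sense that it is the one step that requires strictly more than membership of $\widehat{f(0)}$ in ${\mathcal H}^1_{e^{a\tau}}(\N\times\R)$: without the small margin $\epsilon$ the $L^1$ norms — hence the Young estimates that tame the convolutions defining $w_{n,m}$ — need not be finite, which is precisely the gap flagged in the remark preceding the statement. Everything else (the convolution identity, Young's inequality, the $\tfrac{\ln\ell}{\ell}$ combinatorial bound, and transferring the estimate from $t=0$ to general $t$) is routine once this control is established.
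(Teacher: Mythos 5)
Your proposal is correct and takes essentially the same route as the paper's proof: the same plan of bounding $\|w(t)\|_{{\mathcal H}^1_{e^{a\tau}}(\N^2\times\R)}$ by products of norms of $\widehat{g}$ and $\widehat{f(t)}$ at the shifted exponents $a\pm\epsilon$ (with the resulting $\epsilon^{-1}$ constant), the same combinatorial bound $\sum_{n+m=\ell}\tfrac{1}{n(\ell-n)}=O(\ell^{-1}\log\ell)$, and the same appeal to well-posedness of the flow in ${\mathcal H}^1_{e^{a'\tau}}(\N\times\R)$ for $a'\in[a-\epsilon,a+\epsilon]$ to pass to $t>0$. The only difference is cosmetic: the paper splits the convolution integral over $\R^\pm$ and applies Cauchy--Schwarz directly to get a two-sided $L^2_{e^{a_\pm\tau}}\times L^2_{e^{a_\pm\tau}}\to L^2_{e^{a\tau}}$ inequality, whereas you route the same $\epsilon$-borrowing through Young's $L^1\times L^2\to L^2$ inequality and a separate $L^1$ estimate.
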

 \begin{proof}
By splitting the convolution integral into the sum of an integral over $\R^-$ and one over $\R^+$, one easily gets the following estimate given any two functions $u,v:\R\to\C$
\[
\|u\ast v\|^2_{L^2_{e^{a\tau}}(\R)}\leq \frac{1}{2(a-a_-)}\|u\|^2_{L^2_{e^{a_-\tau}}(\R)}\|v\|^2_{L^2_{e^{a_-\tau}}(\R)}+\frac{1}{2(a_+-a)}\|u\|^2_{L^2_{e^{a_+\tau}}(\R)}\|v\|^2_{L^2_{e^{a_+\tau}}(\R)}
\]
for every $a_-<a<a_+$. Using this inequality in straightforward computations based on the definition of $w_{n,m}$, together with the notation in \eqref{EXTRAN} and the estimate
\[
\sum_{n,m\in\N}\frac{u^2_{n+m}}{nm}=\sum_{\ell=2}^{+\infty}u^2_\ell\sum_{n=1}^{\ell-1}\frac1{n(\ell-n)}\leq 2\sum_{\ell=2}^{+\infty}\frac{u^2_\ell(1+\log \ell)}{\ell}\leq 2\sum_{\ell=2}^{+\infty}u^2_\ell.
\]
we obtain
\begin{align*}
\|w(t)\|^2_{{\mathcal H}^1_{e^{a\tau}}(\N^2\times\R)}&\leq \frac{2\|\widehat{g}\|^2_{L^2_{e^{a_-\tau}}(\R)}+\|\widehat{f(t)}\|^2_{L^2_{e^{a_-\tau}}(\N\times\R)}}{a-a_-}\|\widehat{f(t)}\|^2_{{\mathcal H}^1_{e^{a_-\tau}}(\N\times\R)}\\
&+\frac{2\|\widehat{g}\|^2_{L^2_{e^{a_+\tau}}(\R)}+\|\widehat{f(t)}\|^2_{L^2_{e^{a_+\tau}}(\N\times\R)}}{a_+-a}\|\widehat{f(t)}\|^2_{{\mathcal H}^1_{e^{a_+\tau}}(\N\times\R)}
\end{align*}
The assumptions of the Lemma and the fact that the Cauchy problem is well-posed in ${\mathcal H}^1_{e^{a'\tau}}(\N\times\R)$ for $a'\in \{{a_-,a_+\}}\subset [a-\epsilon,a+\epsilon]$ \cite[Proposition 3.1]{DFG-V16} imply that the second terms in the RHS above are bounded for every $t>0$.
\end{proof}

\section{Existence, stability and bifurcations}\label{S-STABCOND}
This section investigates the connections between the various existence and stability conditions of Section \ref{S-RESULTS} and discusses their concrete materialization in some examples.

For $g$ symmetric and unimodal, instability of $f_\text{hom}$ is
equivalent to existence and stability of stationary PLS (Fig.\
\ref{BIFDIAG} left). In other cases, this connection is not so
tight. In particular, stable PLS could exist while $f_\text{hom}$ is
stable. This happens for instance for the bi-Cauchy distribution
\[
g_{\Delta,\Omega}(\omega)=\frac{\Delta}{2\pi}\left(\frac1{(\omega-\Omega)^2+\Delta^2}+\frac1{(\omega+\Omega)^2+\Delta^2}\right),
\]
when $\Omega>\frac{\Delta}{\sqrt{3}}$ ($\Delta\in\R^+$) so that the
distribution is bimodal. In this case, $f_\text{hom}$ is stable for
all $K\leq K_c$. Yet, stable and unstable stationary PLS $f_\text{s}$
co-appear at some $K<K_c$. Moreover, the unstable PLS branch merges
with $f_\text{hom}$ via sub-critical bifurcation at $K_c$ (Fig.\
\ref{BIFDIAG}, right and see \cite{DFG-V16,MBSOSA09} for details).
While this example shows that $f_\text{hom}$ instability is not
necessary for PLS existence, the next statement shows that it is
sufficient, provided that globally rotating PLS are allowed.
\begin{Pro}
  Assume that $g$ is Lipschitz continuous and such that
  $\|\widehat{g}\|_{L^1_{1+\tau}(\R^+)}<+\infty$ and that
  \eqref{STABHOMOG} fails for some $z$ with $\text{Re}(z)>0$. Then, a
  PLS exists for some frequency $\Omega\in\R$ and profile of type
  $f_\text{\rm s}$.
\end{Pro}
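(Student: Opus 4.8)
The plan is to exploit the fact that the failure of \eqref{STABHOMOG} detects an instability through the dispersion relation, and to turn this spectral information into an existence statement for a PLS via the self-consistency equation \eqref{EXISTPLS} (with $\Omega$ allowed to be nonzero). First I would set up the self-consistency map for globally rotating PLS: for a candidate rotation frequency $\Omega$ and order parameter modulus $r$, the Galilean invariance lets me write the profile as $f_\text{s}$ with the frequency marginal translated by $\Omega$, so the existence condition becomes a version of \eqref{EXISTPLS} with $g$ replaced by $g(\cdot+\Omega)$. The strategy is to view the self-consistency condition as a fixed-point equation $F(r,\Omega)=0$ in the two real unknowns $(r,\Omega)$ (the real and imaginary parts of the complex equation), with $r\in\R^+$ small, and to locate a solution bifurcating from $r=0$.

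The key step is the small-$r$ asymptotics of the self-consistency map. As $r\to 0^+$, the integral $\int_\R\beta\bigl(\tfrac{\omega}{Kr}\bigr)g(\omega+\Omega)\,d\omega$ must be expanded; because $\beta(\omega)=-i\omega+\sqrt{1-\omega^2}$ for $|\omega|\le1$ and $\beta(\omega)=-i\omega+i\omega\sqrt{1-\omega^{-2}}$ for $|\omega|>1$, rescaling $\omega=Kr\,s$ shows that the leading behaviour is governed by the Hilbert-transform-type principal value of $g$ near $\Omega$ together with the local value $g(\Omega)$. Concretely, the real part of the linearized condition reproduces the Penrose-type quantity, namely that the derivative of the self-consistency map as $r\to0$ encodes exactly $\tfrac{K}{2}\int_{\R^+}\hat g(\tau)e^{-z\tau}\,d\tau-1$ evaluated along the marginal axis. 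This is the link I want: the failure of \eqref{STABHOMOG} for some $z$ with $\text{Re}(z)>0$ guarantees, via the Plemelj formula and the argument principle already invoked in the Penrose discussion after \eqref{STABHOMOG}, that there is a frequency $\Omega$ at which the marginal $g(\cdot+\Omega)$ makes the linearized self-consistency map change sign, so a branch of solutions with $r>0$ emanates.

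Carrying this out, I would pick $\Omega$ to be (the imaginary part of) the critical $z_0$ witnessing instability, reduce the imaginary part of the self-consistency condition to a transversality condition that fixes $\Omega$, and then apply the intermediate value theorem (or the implicit function theorem away from the degenerate point) to the real part to produce a genuine $r_\text{s}>0$ solving \eqref{EXISTPLS} for the translated marginal. The regularity hypotheses $g$ Lipschitz and $\|\widehat g\|_{L^1_{1+\tau}(\R^+)}<+\infty$ are precisely what is needed to justify differentiating under the integral sign and to control the principal-value integrals uniformly in the small-$r$ limit, so that the expansion above is rigorous and the map $F$ is continuous.

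The main obstacle I expect is the singular small-$r$ analysis of $J_k$-type integrals: the integrand in \eqref{EXISTPLS} has the boundary $|\omega|=Kr$ between the locked and drifting regimes, and as $r\to0$ this boundary collapses while $\beta$ develops a principal-value singularity. Making the leading-order expansion precise — showing that the singular integral converges to the correct Penrose functional and that the remainder is $o(1)$ under only Lipschitz regularity of $g$ — is the delicate part, and it is where the weighted $L^1$ control of $\hat g$ must be used to dominate the tails. Once this expansion is controlled, the passage from "\eqref{STABHOMOG} fails" to "the linearized self-consistency map has a sign change in $(r,\Omega)$" is essentially the same Nyquist/argument-principle computation already sketched after \eqref{STABHOMOG}, and the existence of the PLS follows by a continuity argument.
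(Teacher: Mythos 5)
Your small-$r$ analysis is essentially the paper's key lemma: under the Lipschitz hypothesis, the rescaled self-consistency functional $F_r(\Omega)=\frac1{r}\int_{\R}\beta\bigl(\frac{\omega+\Omega}{Kr}\bigr)g(\omega)\,d\omega$ converges as $r\to0^+$ to $\frac{K}{2}\int_{\R^+}\hat g(\tau)e^{i\Omega\tau}d\tau$ via the Plemelj formula (splitting at $|\omega|\sim r^{2/3}$), and the failure of \eqref{STABHOMOG} at an interior point of the right half-plane translates, by the argument principle, into a nonzero winding number of the closed limit curve $\{F_{0+0}(\Omega)\}_{\Omega\in\overline{\R}}$ around the point $1$. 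So the first two steps of your plan are sound and coincide with the paper's.

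The gap is in the final continuation step. Choosing $\Omega=\mathrm{Im}(z_0)$ is not legitimate: the limit curve only records the boundary values of the Laplace transform on the imaginary axis, so the existence of a zero of $\frac{K}{2}\int_{\R^+}\hat g(\tau)e^{-z\tau}d\tau-1$ at an interior point $z_0$ says nothing about $F_{0+0}$ at that particular $\Omega$, and the rotation frequency of the eventual PLS has no reason to equal $\mathrm{Im}(z_0)$. More seriously, reducing to a one-dimensional intermediate value argument requires first solving $\mathrm{Im}(F_r(\Omega))=0$ for $\Omega$ as a continuous function of $r$ --- an implicit-function-theorem step needing a transversality condition that the hypotheses do not supply and that you do not verify --- and then knowing the sign of $\mathrm{Re}(F_r(\Omega(r)))-1$ at \emph{both} ends of an interval in $r$; you never identify the second endpoint. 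The paper circumvents both issues with a global degree argument: since $\mathrm{Re}\,\beta\le1$ with strict inequality away from $0$, one has $\mathrm{Re}(F_1(\Omega))<\int_\R g=1$ for all $\Omega$, so the winding number of $\{F_1(\Omega)\}_{\Omega\in\overline{\R}}$ around $1$ is zero, while it is nonzero in the limit $r\to0^+$; homotopy invariance in $r$, together with the uniform decay $\lim_{\Omega\to\pm\infty}\sup_{r\in(0,1]}|F_r(\Omega)|=0$ that closes the paths, then forces some path $\{F_r(\Omega)\}_{\Omega}$ with $r\in(0,1)$ to pass through $1$. The two missing ingredients in your proposal are precisely this vanishing of the degree at $r=1$ and the replacement of the IFT/IVT reduction by homotopy invariance of the winding number, which is the robust tool for a two-real-equations-in-two-unknowns problem with no nondegeneracy hypothesis.
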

We can have $\Omega\neq 0$ even though $g$ is symmetric around 0, see the example below.
\begin{proof}
  When combined with a suitable Galilean transformation, condition
  \eqref{EXISTPLS} immediately yields the following existence
  condition for PLS with frequency $\Omega$ and profile $f_\text{s}$
\[
F_r(\Omega)=1\quad\text{where}\quad F_r(\Omega)=\frac1{r}\int_{\R}\beta\left(\frac{\omega+\Omega}{Kr}\right)g(\omega)d\omega.
\]
In order to prove the existence of a solution $(r,\Omega)$ when $f_\text{hom}$ is unstable, notice that
\begin{itemize}
\item[$\bullet$] $F$ is continuous at every $(r,\Omega)\in (0,1]\times \R$, as a consequence of $|\beta(\cdot)|\leq 1$ and Lebesgue dominated convergence.
\item[$\bullet$] ${\displaystyle\lim_{\Omega\to \pm\infty}\sup_{r\in (0,1]}}|F_r(\Omega)|=0$ as a consequence of $F_r(\Omega)=K{\displaystyle\int_{\R}}\beta(\omega)g(Kr\omega-\Omega)d\omega$ and dominated convergence again.
\end{itemize}
Extending $F_r$ by continuity to $\overline{\R}$, the expression
$\{F_r(\Omega)\}_{\Omega\in\overline{\R}}$ defines, for every
$r\in (0,1]$, a closed path in the complex plane. As the next
statement reveals, the limit $r\to 0$ also defines a closed path via
the quantity involved in \eqref{STABHOMOG}.
\begin{Lem}
If $g$ is Lipschitz continuous, then the limit $F_{0+0}(\Omega)$ exists for every $\Omega\in\R$ and we have
\[
F_{0+0}(\Omega)=\frac{K}2\int_{\R^+}\hat{g}(\tau)e^{i\Omega\tau}d\tau,\ \forall \Omega\in\R.
\]
\end{Lem}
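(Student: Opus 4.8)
The plan is to pass to the limit $r\to 0^+$ directly in the rescaled form $F_r(\Omega)=K\int_{\R}\beta(\omega)\,g(Kr\omega-\Omega)\,d\omega$ recorded in the second bullet, keeping track separately of the two mechanisms through which $\beta$ contributes. The only real difficulty is that $\beta$ is \emph{not} integrable: its explicit expression gives $\mathrm{Re}\,\beta(\omega)=\sqrt{1-\omega^2}\,\mathbf{1}_{|\omega|\le 1}$, which is bounded with compact support, while $\mathrm{Im}\,\beta$ has a slowly decaying tail, $\beta(\omega)\sim -\tfrac{i}{2\omega}$ as $|\omega|\to\infty$. I would therefore split $\beta=\beta_c+\beta_t$ with $\beta_c=\beta\,\mathbf{1}_{|\omega|\le 1}\in L^1(\R)$ and $\beta_t=\beta\,\mathbf{1}_{|\omega|>1}$, the latter purely imaginary and odd. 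Being a Lipschitz probability density, $g$ is bounded and belongs to $L^1(\R)$, and these are essentially the only properties of $g$ that enter.

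For the compactly supported part, $|\beta_c(\omega)\,g(Kr\omega-\Omega)|\le |\beta_c(\omega)|\,\|g\|_\infty\in L^1(\R)$, so dominated convergence applies immediately and yields $\tfrac{K\pi}{2}\,g(\mp\Omega)$ as $r\to 0^+$ (the odd part of $\beta_c$ integrating to zero, and $\int_{-1}^{1}\sqrt{1-\omega^2}\,d\omega=\tfrac{\pi}{2}$); this is the $\pi\delta$ contribution. The tail is the crux. Writing $\beta_t=i\,b$ with $b$ odd and folding the integral onto $\omega>1$ produces a difference $g(Kr\omega-\Omega)-g(-Kr\omega-\Omega)$; after the substitution $u=Kr\omega$ the integrand is dominated, \emph{uniformly} in small $r$, by $H(u)=|g(u-\Omega)-g(-u-\Omega)|/u$. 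Here the elementary bound $|\sqrt{1-x^2}-1|\le x^2$ is exactly what turns the tail into the clean factor $1/u$; then $H$ is bounded near $u=0$ by twice the Lipschitz constant of $g$, and at infinity by $g(u-\Omega)+g(-u-\Omega)\in L^1$ since $1/u\le 1$ there, so $H\in L^1(0,\infty)$. Dominated convergence then delivers the principal-value (Hilbert-transform) limit $-\tfrac{iK}{2}\,\mathrm{p.v.}\int_{\R}\tfrac{g(\omega)}{\omega\pm\Omega}\,d\omega$. \textbf{The main obstacle is precisely this uniform $L^1$ domination of the non-integrable tail}; the rest is soft.

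It then remains to recombine the two pieces. By the Sokhotski--Plemelj relation $\int_{\R^+}e^{-is\tau}\,d\tau=\pi\delta(s)-i\,\mathrm{p.v.}\tfrac1s$, the $\pi g$ term and the principal-value term are exactly the real and imaginary parts of $\tfrac{K}{2}\int_{\R^+}\hat{g}(\tau)\,e^{i\Omega\tau}\,d\tau$ (the signs of $\Omega$ being fixed by the Fourier convention of the paper, and the integral converging absolutely because $\hat{g}\in L^1(\R^+)$ under $\|\widehat{g}\|_{L^1_{1+\tau}(\R^+)}<+\infty$). This is the claimed identity, and it exhibits $F_{0+0}(\Omega)$ as the boundary value on $\mathrm{Re}(z)=0$ of the Laplace transform of $\hat{g}$ appearing in \eqref{STABHOMOG} --- precisely the object needed to close the winding-number argument for the surrounding Proposition. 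Conceptually, the computation says that in Fourier variables $\beta$ acts in the limit as the projection onto positive frequencies: its symbol $\int_{\R}\beta(\omega)e^{i\xi\omega}\,d\omega$ tends to $0$ for $\xi>0$ and to a nonzero constant for $\xi<0$, so only the $\{\tau>0\}$ content of $\hat{g}$ survives. This is the same Hardy-space/analyticity structure that underlies the Ott--Antonsen reduction discussed in Section \ref{S-OA}.
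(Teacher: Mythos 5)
Your argument is correct and reaches the same two contributions as the paper --- the arcsine bump producing $\tfrac{K\pi}{2}g(-\Omega)$ and the $-\tfrac{i}{2\omega}$ tail producing the principal value, recombined through Plemelj --- but the technical route to the limit is genuinely different. The paper splits the \emph{integration domain} at the intermediate scale $|\omega|=r^{2/3}$, Taylor-expands $\tfrac1r\beta(\tfrac{\omega}{Kr})$ in the outer region (with an $O(r^2/\omega^3)$ error that integrates to $O(r^{2/3})$ there), and uses the Lipschitz decomposition $g(\omega-\Omega)=g(-\Omega)+\omega h(\omega)$ in the inner region; the convergence of the truncated integral to the principal value is a separate (Lipschitz-based) step. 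You instead split the \emph{function} $\beta$ at the fixed scale $|\omega|=1$ in the rescaled variable, fold the odd, purely imaginary tail onto $\omega>1$, and absorb everything into a single dominated-convergence argument with the $r$-uniform majorant $H(u)=|g(u-\Omega)-g(-u-\Omega)|/u$, whose integrability encodes simultaneously the Lipschitz control at the singularity and the $L^1$ decay at infinity. This is a clean and arguably more self-contained way to tame the non-integrable tail, since the bound $|b(u/(Kr))|/(Kr)\leq 1/u$ removes the need for any intermediate cutoff scale; what it costs is only that the principal value emerges already in its folded (one-sided) form rather than as a limit of symmetric truncations, which is harmless. Two minor points to tighten: the statement that ``the odd part of $\beta_c$ integrates to zero'' is only used \emph{after} passing to the limit $g(Kr\omega-\Omega)\to g(-\Omega)$, so phrase it that way; and you should commit to a definite sign of $\Omega$ rather than writing $g(\mp\Omega)$ --- with the paper's convention $\widehat{g}(\tau)=\int g(\omega)e^{-i\tau\omega}d\omega$ and the form $F_r(\Omega)=K\int\beta(\omega)g(Kr\omega-\Omega)\,d\omega$, the bump contributes $\tfrac{K\pi}{2}g(-\Omega)$ and the tail $-\tfrac{iK}{2}\,\mathrm{PV}\int_{\R}\tfrac{g(u-\Omega)}{u}\,du$, and it is worth checking once and for all that these match the boundary value of the Laplace transform in \eqref{STABHOMOG} at $z=-i\Omega$, since this identification is what the winding-number argument of the surrounding Proposition actually consumes.
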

The proof is given below. As argued in \cite{D16a,FG-VG16}, continuity in $\Omega$ and the Riemann-Lebesgue lemma ensure that $\{F_{0+0}(\Omega)\}_{\Omega\in\overline{\R}}$ is also a closed path. Moreover, these references showed that, assuming $\|\widehat{g}\|_{L^1_{1+\tau}(\R^+)}<+\infty$, this path winding number around the point $z=1$ is non-zero if \eqref{STABHOMOG} fails for some $z$ with $\text{Re}(z)>0$,

On the other hand, the definition of $\beta$ implies that $\text{Re}(\beta(\omega))\leq 1$ for all $\omega\in\R$, with strict inequality when $\omega\neq 0$. It follows that
\[
\text{Re}(F_1(\Omega))<\int_{\R}g(\omega)d\omega=1,\ \forall \Omega\in\R.
\]
The limits $F_1(\pm\infty)=0$ then imply that
$\{F_1(\Omega)\}_{\Omega\in\overline{\R}}$ winding number around $z=1$
must but 0. By the uniform decay above, there must exist $r\in (0,1)$
for which the path $\{F_r(\Omega)\}_{\Omega\in\overline{\R}}$ contains
$z=1$; hence the rotating PLS. The proof of the Proposition is
complete.

\noindent
{\sl Proof of the Lemma.} We rely on the Plemelj formula
\[
\int_{\R^+}\hat{g}(\tau)e^{i\Omega\tau}d\tau = \pi g(-\Omega)+i\text{ PV}\int_{\R}\frac{g(\omega-\Omega)}{\omega}d\omega
\]
and we separate the integral in $F_r$ into the domain $|\omega|<r^{2/3}$ and $|\omega| \geq r^{2/3}$. In the second domain, we have for small $r$
\[
\frac1{r} \beta(\frac{\omega}{Kr})=\frac{i\omega}{Kr^2}\left(1-\sqrt{1-\left(\frac{Kr}{\omega}\right)^2}\right)=\frac{iK}{2\omega}+O\left(\left(\frac{Kr}{\omega}\right)^2\right)
\]
and then, using that $g\in L^1(\R)$,
\[
\lim_{r\to 0}\frac1{r}\int_{|\omega| \geq r^{2/3}}\beta(\frac{\omega}{Kr})g(\omega-\Omega)d\omega=\frac{iK}{2}\lim_{r\to 0}\int_{|\omega| \geq r^{2/3}}\frac{g(\omega-\Omega)}{\omega}d\omega=\frac{iK}{2}\text{ PV}\int_{\R}\frac{g(\omega-\Omega)}{\omega}d\omega.
\]
In the first domain, we rely on $g$ being Lipschitz continuous to write $g(\omega-\Omega)=g(-\Omega)+\omega h(\omega)$ where $h$ is bounded. Then, we have for $r$ small enough
\[
\frac1{r}\int_{|\omega|< r^{2/3}}\beta(\frac{\omega}{Kr})g(\omega-\Omega)d\omega=\frac{g(-\Omega)}{r}\int_{|\omega|< Kr}\sqrt{1-\left(\frac{\omega}{Kr}\right)^2}d\omega+\frac1{r}\int_{|\omega|< r^{2/3}}\beta(\frac{\omega}{Kr})\omega h(\omega)d\omega.
\]
That $h$ is bounded implies that the second integral vanishes in the limit $r\to 0$. The Lemma then follows from the fact that $\int_{|x|< 1}\sqrt{1-x^2}\ dx=\frac{\pi}2$.
\end{proof}
To conclude this Section, we provide an example of intricate bifurcation diagram (Fig.\ \ref{TRICAUCHY}, right) similar to those reported in the Kuramoto-Sakaguchi model \cite{OW12,OW13}, but obtained in \eqref{KPDE} for the tri-Cauchy distribution (Fig.\ \ref{TRICAUCHY}, left)
\[
g_{\Delta,\Omega,\alpha}=(1-\alpha)g_{1,0}+\alpha g_{\Delta,\Omega}
\]
where $g_{\Delta,\Omega}$ is the bi-Cauchy distribution defined above.
\begin{figure}[ht]
\begin{center}
\includegraphics[scale=0.45]{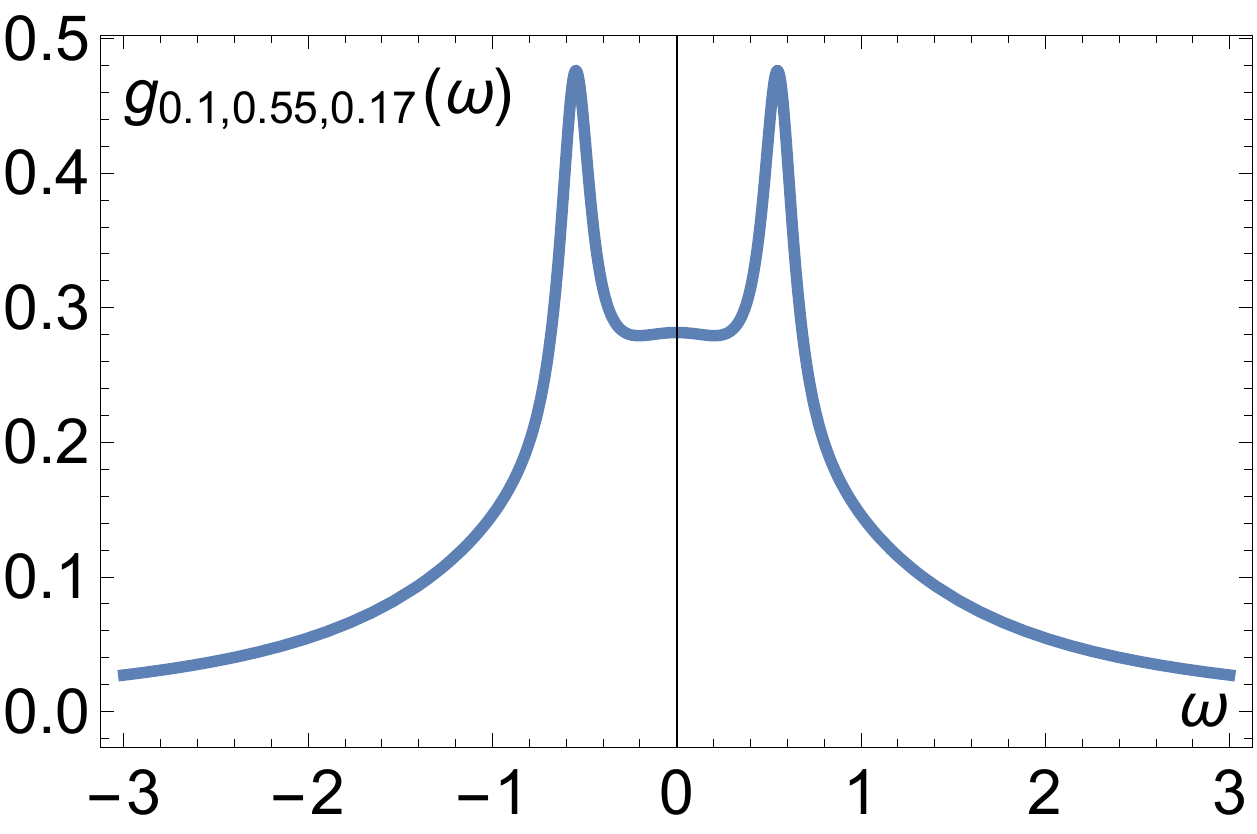}
\hspace{0.5cm}
\includegraphics[scale=0.55]{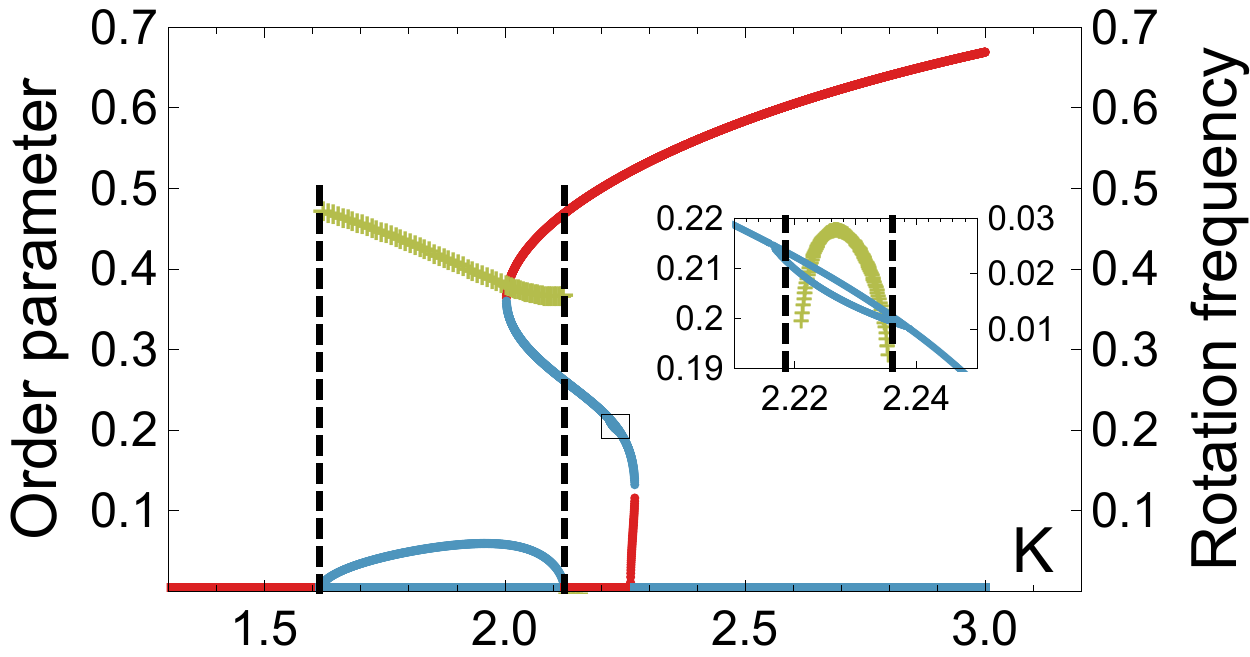}
\end{center}
\caption{{\sl Left.} Graph of the tri-Cauchy distribution for $g_{\Delta,\Omega,\alpha}$ for $\Delta=0.1$, $\Omega=0.55$ and $\alpha=0.17$. {\sl Right.} Corresponding numerically computed bifurcation diagram in \eqref{KPDE}. Red (resp.\ blue) points indicate stable (resp.\ unstable) PLS, or $f_\text{hom}$ when the order parameter is zero. For such $K$ where rotating PLS exist, Green $+$ indicate the global rotation frequency $|\Omega|$, when non-zero. Inset: Zoom into the region $(K,r)\in [2.21,2.25]\times [0.19,0.22]$ where a rotating PLS branch ($\Omega\in [0,0.03]$) emerges from the unstable stationary PLS branch,}
\label{TRICAUCHY}
\end{figure}
In particular, while the second part of the diagram is reminiscent of
the saddle-node bifurcation associated with the bi-Cauchy
distribution, the first destabilization scheme of $f_\text{hom}$ at
$K\simeq 1.61$ is original and generates a branch of rotating PLS
pairs, with frequency $\Omega$ and $-\Omega$ respectively. Also,
$f_\text{hom}$ becomes stable again for $K\simeq 2.12$ and then
suffers a pitchfork bifurcation at $K\simeq 2.27$, as for a unimodal
distribution.

\section{Final comments and open questions}\label{S-EXT}
Proving asymptotic decay in the Kuramoto PDE has essentially consisted
in reducing to a Volterra equation which captures stabilization
mechanisms of the linearized dynamics.  This approach, and the control
of the remaining nonlinear terms, is not limited to the basic
model. It can be shown to extend to various extensions such as when
$f$ also depends on an additional connectivity parameter
$k\in {\mathcal D}$ (${\mathcal D}\subset \R^n$, compact) and the
potential writes
\[
V[f](\theta,\omega,k)=\omega + K \int_{\T^1\times\R\times {\mathcal D}} \alpha(k,k')\sin(\theta' - \theta-\beta) f(d\theta',d\omega',dk'),\ \forall (\theta,\omega,k)\in\T^1\times\R\times {\mathcal D}
\]
where $\beta\in\R$ and $\alpha:{\mathcal D}\times {\mathcal D}\to \R^+$ is assumed to be Lipschitz continuous.

When the connectivity parameter is irrelevant (ie.\ $\alpha\equiv 1$),
the resulting PDE governs the dynamics of empirical measures of the
so-called Kuramoto-Sakaguchi model \cite{SY86}. The very same analysis
as in Section \ref{S-PROOF} can be developed to obtain stability
conditions \cite{OW12,OW13} and prove, without any additional
conceptual obstacle, asymptotic stability of stationary solutions.

Otherwise, when ${\mathcal D}$ is a finite set, the equation describes
the continuum limit of interacting communities of coupled oscillators
\cite{BHOS08,MKB04}. The analysis repeats for the measure vector
$\{f_k(d\theta,d\omega)\}_{k\in {\mathcal D}}$, without any difficulty
other than having to deal with multi-dimensional Volterra equations
for the evolution of the order parameter vector
$\{r_k\}_{k\in {\mathcal D}}$ with components
\[
r_k=\int_{\T^1\times\R}e^{i\theta}f_k(d\theta,d\omega).
\]

More general cases, when ${\mathcal D}$ is infinite, include modelling of networks with random interactions. For arbitrary $\alpha$, explicit existence and stability conditions might be out of reach, especially for PLS. However, when this function decomposes into a product over individual variables \cite{I04,RO14}
\[
\alpha(k,k')=\alpha_1(k)\alpha_2(k')
\]
as when preferential attachment takes place \cite{BA99}, then a self-consistent Volterra equation holds for the integrated order parameter
\[
\int_{\T^1\times\R\times {\mathcal D}}e^{i\theta}\alpha_2(k')f(d\theta,d\omega,dk')
\]
and the analysis entirely repeats in this case.

Finally, here are two problems that remain unsolved, if not unaddressed:
\begin{itemize}
\item Prove asymptotic stability of other remarkable solutions of the Kuramoto PDE \eqref{KPDE}, such as the standing waves discussed in \cite{C94}.
\item Prove asymptotic stability of stationary states (and other remarkable states) in extensions of the Kuramoto model for which interactions include several Fourier modes, as in the so-called Daido model \cite{D96}. The proof in \cite{FG-VG16} (inspired from \cite{FR16}) of Landau damping to $f_\text{hom}$ straightforwardly extends to this case. However, the problem of asymptotic stability of singular states, such as PLS, remains entirely open.
\end{itemize}

\ethics{This work did not involve ethics issue.}

\dataccess{This article has no additional data.}

\aucontribute{All authors have contributed to the paper.}

\competing{We have no competing issue.}

\funding{H.D.\ is supported by Universit\'e Sorbonne Paris Cit\'e, in the framework of the ``Investissements d'Avenir'', convention ANR-11-IDEX-0005, and the People Programme (Marie Curie Actions) of the European Union's Seventh Framework Programme (FP7/2007-2013) under REA grant agreement n.\ PCOFUND-GA-2013-609102, through the PRESTIGE programme coordinated by Campus France.}

\ack{We are grateful to Stanislav M. Mintchev for careful reading of the manuscript, comments and suggestions.}

\disclaimer{No particular disclaimer applies.}

\end{document}